\let\phi\varphi
\newtheorem{theorem}{Theorem}[section]
\newtheorem{corollary}[theorem]{Corollary}
\newtheorem{lemma}[theorem]{Lemma}
\newtheorem{proposition}[theorem]{Proposition}
\theoremstyle{definition}
\newtheorem{remark}[theorem]{Remark}
\newtheorem{example}[theorem]{Example}
\newcommand*{\bbb}[1]{{\mathbb{#1}}}
\newcommand{\C}{\bbb{C}}
\newcommand{\Q}{\bbb{Q}}
\newcommand{\Z}{\bbb{Z}}
\newcommand{\F}{\bbb{F}}
\newcommand{\Gal}{\text{Gal}}
\newcommand{\Frob}{\text{Frob}}
\newcommand{\End}{\text{End}}
\newcommand{\Mod}[1]{\ (\mathrm{mod}\ #1)}
\newcommand{\p}{\mathfrak{p}}
\newcommand{\mO}{\mathcal{O}}
\title{On CM Elliptic Curves and the Cyclotomic $\lambda$-Invariants of Imaginary Quadratic Fields}
\author{Matt Stokes}
\begin{document}

\pagestyle{plain}
\maketitle

\begin{abstract}
Let $K$ be an imaginary quadratic field, and fix a prime $p > 3$ that does not divide the class number of $K$.  In this paper we prove that Iwasawa's $\lambda$-invariant for the cyclotomic $\Z_p$-extension of $K$ is greater than $1$ if and only if the number of points on a certain reduced elliptic curve is divisible by $p^2$.
\end{abstract}

\section{Introduction}

In this paper we will prove a surprising connection between the classical Iwasawa $\lambda$-invariants for the cyclotomic $\Z_p$-extension of an imaginary quadratic field $K$ (these are the $\lambda$-invariants that track the growth of the Sylow-$p$ subgroups of the class groups associated to the $\Z_p$-extension), and certain $p$-powered torsion of a reduced elliptic curve with complex multiplication by $\mO_K$.  In this introduction we will first give a brief review of Iwasawa's growth formula (Theorem \ref{igf}), and then give an overview of classical Iwasawa theory for imaginary quadratic fields.   We will then state the main result of this paper.

\subsection{The Growth of Class Groups in a $\Z_p$-extension}

Given a tower of number fields $F_0 \subset F_1 \subset F_2 \subset \dots$, one may wish to study how the class group $Cl(F_n)$ behaves as $n$ increases.  The general behavior of these groups can be quite chaotic.  However, Iwasawa theory gives us a setting where much more can be said.

Let $F$ be a number field, and $p$ a prime.  Suppose that $\mathcal{F}/F$ is Galois such that $\Gal(\mathcal{F}/F) \cong \Z_p$.  Then we say that $\mathcal{F}/F$ is a $\Z_p$-extension of $F$.  Such an extension always exists, and can be seen as follows:  For $n \in \Z^+$, denote $\zeta_{p^n}$ to be a $p^n$-th root of unity.  Without loss of generality, we will assume that $\zeta_p \not\in F$.  Then the extension $F(\zeta_{p^{n+1}})/K$ is Galois with Galois group isomorphic to $\Z/p^{n}(p-1)\Z$, and so contains a sub-extension $F_n/F$ such that $\Gal(F_n/F) \cong \Z/p^{n}\Z$.  Taking $F_{\infty} = \bigcup F_n$, we have that $\Gal(F_{\infty}/F) = \varprojlim \Z/p^n\Z \cong \Z_p$.  By virtue of this construction, we say that $F_{\infty}/F$ is the cyclotomic $\Z_p$-extension of $F$.  For a general $\Z_p$-extension $\mathcal{F}/F$, we will have for each $n \in \Z^+$ a Galois extension $F_n / F$ such that $\Gal(F_n/F) \cong \Z/p^n\Z$, $F_n \subset F_{n+1}$, and $\bigcup F_n = \mathcal{F}$.  

Now, given a $\Z_p$-extension $\mathcal{F}/F$, we denote $A_n = A(F_n)$ to be the Sylow-$p$ subgroup of $Cl(F_n)$.  Then 
\begin{theorem}[Iwasawa]\label{igf}
With $\mathcal{F}/F$ as above, there exists $\lambda, \mu \in \Z^+$ and $\nu \in \Z$ such that for $n$ sufficiently large,
\[
\#A_n = p^{n \lambda + p^n \mu + \nu}.
\]
\end{theorem}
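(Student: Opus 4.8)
**The plan is to prove Iwasawa's growth formula via the algebraic structure theory of finitely generated modules over the Iwasawa algebra $\Lambda = \Z_p[[T]]$.** First I would set up the standard framework: let $X = \varprojlim A_n$ be the inverse limit of the Sylow-$p$ subgroups under the norm maps, and observe that $\Gamma = \Gal(\mathcal{F}/F) \cong \Z_p$ acts on $X$, making $X$ a module over the completed group algebra $\Z_p[[\Gamma]]$, which after fixing a topological generator $\gamma_0$ of $\Gamma$ is identified with $\Lambda = \Z_p[[T]]$ via $\gamma_0 \mapsto 1+T$. The key input from class field theory is that $X$ is a finitely generated $\Lambda$-module (in fact torsion, though for the growth formula one mainly needs finite generation plus the description of $A_n$ as a quotient), so I would either cite this or sketch it via the maximal unramified abelian pro-$p$ extension of $\mathcal{F}$.

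\textbf{The second step is the structure theorem for $\Lambda$-modules.} Since $\Lambda$ is a complete regular local ring of Krull dimension $2$ (a two-dimensional UFD), any finitely generated torsion $\Lambda$-module $X$ is pseudo-isomorphic to a finite direct sum
\[
E = \left(\bigoplus_{i=1}^{s} \Lambda/(p^{m_i})\right) \oplus \left(\bigoplus_{j=1}^{t} \Lambda/(f_j(T)^{k_j})\right),
\]
where the $f_j$ are distinguished (i.e.\ Weierstrass) irreducible polynomials; "pseudo-isomorphic" means there is a $\Lambda$-module map with finite kernel and cokernel. I would then set $\mu = \sum m_i$ and $\lambda = \sum k_j \deg f_j$. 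The proof of this structure theorem I would sketch rather than carry out: reduce to the cases $X$ annihilated by a power of $p$, and $X$ with no $p$-torsion, using the Weierstrass preparation theorem to handle the latter by a dévissage on distinguished polynomials.

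\textbf{The third step is to translate the algebraic invariants into the asymptotic size of $A_n$.} Writing $\omega_n = (1+T)^{p^n} - 1$, one shows that $A_n \cong X/\omega_n X$ for $n$ large — this uses that only finitely many primes ramify in $\mathcal{F}/F$ and that they are eventually totally ramified, together with Nakayama-type arguments relating $A_n$ to the coinvariants $X_{\Gamma_n}$. Then, because a pseudo-isomorphism $X \to E$ has finite kernel and cokernel (both killed by a fixed power of $\omega_n$-independent ideal), the orders $\#(X/\omega_n X)$ and $\#(E/\omega_n E)$ differ by a bounded factor for all $n$; in fact one must be slightly careful and show they agree up to a constant \emph{independent of $n$}, which is where one invokes that $\omega_{n+1}/\omega_n$ is a distinguished polynomial coprime to anything relevant for large $n$. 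Finally a direct computation on each elementary summand gives $\#(\Lambda/(p^m,\omega_n)) = p^{m p^n}$ and $\#(\Lambda/(g(T),\omega_n)) = p^{n \deg g}$ for $n$ large (using that $g \mid \omega_n$ eventually up to a unit and a resultant computation), so $\#(E/\omega_n E) = p^{\mu p^n + \lambda n + c}$ for a constant $c$ and all large $n$; absorbing the bounded discrepancy into the constant yields $\#A_n = p^{\lambda n + \mu p^n + \nu}$.

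\textbf{The main obstacle} is the passage from the algebraic module $X$ to the arithmetic groups $A_n$ — i.e.\ establishing $A_n \cong X/\omega_n X$ for $n \gg 0$ with a uniform bound on the error — rather than the purely algebraic structure theory, which is formal once one has Weierstrass preparation. Controlling the ramification (ensuring ramified primes become totally ramified in the tower, so that the inertia subgroups stabilize) and checking that the kernel/cokernel of the pseudo-isomorphism contribute only to the constant $\nu$ and not to $\lambda$ or $\mu$ are the delicate points; this is precisely the step where the hypothesis that $\mathcal{F}/F$ is a $\Z_p$-extension (so $\Gamma_n = \Gamma^{p^n}$ and the layers are well-behaved) is essential.
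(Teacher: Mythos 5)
The paper never proves Theorem \ref{igf}: it is quoted as background and attributed to Iwasawa (the standard reference is Theorem 13.13 of \cite{wash}), so there is no internal proof to compare against. Your proposal reconstructs the standard argument: pass to $X=\varprojlim A_n$, make it a finitely generated torsion module over $\Lambda=\Z_p[[T]]$ via a topological generator of $\Gamma=\Gal(\mathcal{F}/F)$, invoke the structure theorem up to pseudo-isomorphism, compute $\#(E/\omega_n E)$ summand by summand (giving $p^{m p^n}$ for $\Lambda/(p^m)$ and $p^{n\deg g + c}$ for $\Lambda/(g)$), and transfer the count back to $A_n$. This is the right proof, the definitions $\mu=\sum m_i$ and $\lambda=\sum k_j\deg f_j$ are the correct ones, and your identification of the arithmetic descent from $X$ to $A_n$ as the delicate step is accurate.

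One caveat: the isomorphism $A_n\cong X/\omega_n X$ that you assert for large $n$ is literally true only when a single prime ramifies in $\mathcal{F}/F$ and is totally ramified from the ground field on. In general one must first pass to a layer $F_e$ above which every ramified prime is totally ramified, and the correct statement is $A_n\cong X/\nu_{n,e}Y_e$ for $n\ge e$, where $\nu_{n,e}=\omega_n/\omega_e$ and $Y_e$ is a submodule built from generators of the inertia subgroups, with $X/Y_e\cong A_e$ finite; the order computation is then applied to $Y_e$ (which has the same $\lambda$ and $\mu$ as $X$) rather than to $X$ itself, and the finite index $[X:Y_e]$ is absorbed into $\nu$. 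You gesture at this under ``controlling the ramification,'' but as written the quotient is wrong whenever more than one prime ramifies, and repairing it is precisely the content of Lemmas 13.15 and 13.21 in \cite{wash}. With that correction your sketch goes through.
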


We say that $\lambda, \mu$ and $\nu$ are the Iwasawa invariants of $\mathcal{F}/F$.  At times our notation for the Iwasawa invariants may indicate the prime $p$, the field $F$, or the $\Z_p$-extension $\mathcal{F}/F$ (e.g. $\lambda_p(\mathcal{F}/F)$).  It was conjectured by Iwasawa that the $\mu = 0$ for the Cyclotomic $\Z_p$-extension of $F$.  This is still an open problem, however, Ferrero and Washington \cite{fw} have shown that the cyclotomic $\mu$-invariant vanishes when $F$ is an Abelian extension of $\Q$.  Washington has also shown in \cite{wash2} (see also Theorem 16.12 on page 385 of \cite{wash}) that when $F/\Q$ is Abelian and $\mathcal{F}/F$ is the cyclotomic $\Z_p$-extension of $F$, the non-$p$-part of $Cl(F_n)$ is bounded as $n$ goes to infinity.

\subsection{Cyclotomic $\Z_p$-extensions of Imaginary Quadratic Fields} 

Here we aim to convince the reader that imaginary quadratic fields serve as the first non-trivial case in which to study Iwasawa theory.  We also state some important results in this area.  For a number field $F$, denote $r$ to be the number of pairs of complex embeddings of $F$, and $\tilde{F}$ to be the compositum of all $\Z_p$-extensions of $F$.  If $F/\Q$ is Abelian, then we have $\Gal(\tilde{F}/F) \cong \Z_p^{r +1}$, which tells us that any totally real Abelian number field has only one $\Z_p$-extension (the cyclotomic one).  On the other hand, if $K$ is an imaginary quadratic field, we have that $\Gal(\tilde{K}/K) \cong \Z_p^2$, so that there are infinitely many $\Z_p$-extensions of $K$.  For more on the general case (i.e. $F/\Q$ non-Abelian) see Theorem 13.4 in Chapter 13 of \cite{wash}.  The following result of Iwasawa tells us everything we need to know about the $\Z_p$-extension of $\Q$:

\begin{theorem}[Iwasawa \cite{iwasawa0}]

Suppose that $L/F$ is a Galois extension of number fields such that $\Gal(L/K) \cong \Z/p^n\Z$ for some $n \in \Z^+$, and that exactly one prime ramifies in $L/F$ (this prime does not have to lie above $p$).  Then $p$ divides $\#Cl(L)$ only if $p$ divides $\#Cl(F)$.
\end{theorem}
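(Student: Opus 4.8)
The plan is to derive the statement from Chevalley's ambiguous class number formula for cyclic extensions, together with the elementary fact that a finite $p$-group acting on a nontrivial finite abelian $p$-group has a nontrivial fixed subgroup. To set up, write $G = \Gal(L/F)\cong\Z/p^n\Z$, let $\mathfrak{p}$ be the unique prime of $F$ that ramifies in $L/F$, and let $e$ be its ramification index; since $e\mid[L:F]=p^n$ we may write $e=p^k$ with $1\le k\le n$. Because $p>3$ is odd, no archimedean place of $F$ ramifies in $L/F$ (the local degree at a real place divides $p^n$ and so is odd, hence a real place cannot become complex), so the product of ramification indices over all places of $F$ that occurs in Chevalley's formula is exactly $e=p^k$.

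Next I would invoke Chevalley's formula for the group $Cl(L)^{G}$ of ambiguous ideal classes of the cyclic extension $L/F$:
\[
\#\bigl(Cl(L)^{G}\bigr)\;=\;\frac{\#Cl(F)\cdot p^{\,k}}{p^{\,n}\cdot\bigl[\mO_F^\times:\mO_F^\times\cap\Nrm_{L/F}(L^\times)\bigr]}.
\]
Taking $p$-adic valuations, and using $k\le n$ together with the fact that the displayed unit index is a positive integer (so has nonnegative $p$-valuation), yields
\[
v_p\bigl(\#(Cl(L)^{G})\bigr)\;\le\;v_p\bigl(\#Cl(F)\bigr)+(k-n)\;\le\;v_p\bigl(\#Cl(F)\bigr).
\]

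To finish, suppose $p\mid\#Cl(L)$ and let $A_L$ be the Sylow-$p$ subgroup of $Cl(L)$; it is nontrivial and, being characteristic in $Cl(L)$, is stable under $G$. As $G$ is a $p$-group acting on the nontrivial finite abelian $p$-group $A_L$, the class equation gives $\#(A_L^{G})\equiv\#A_L\equiv 0\pmod p$, so $A_L^{G}$ is nontrivial; since $A_L^{G}\le Cl(L)^{G}$ this forces $p\mid\#(Cl(L)^{G})$. Combining with the inequality above gives $p\mid\#Cl(F)$, which is the claim.

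The only substantial ingredient here is Chevalley's ambiguous class number formula itself; its proof --- via the Herbrand quotient and the cohomology of the id\`ele class group of $L$, or by a direct count of principal ambiguous ideals --- is the real content, and I would cite it rather than reproduce it. Two points need care in the write-up: the reduction eliminating archimedean ramification (valid precisely because $p$ is odd), and the observation that we use only the \emph{inequality} extracted from the formula, so a large power of $p$ hidden in the unit index does no harm. For a more self-contained account one could first remark that any nontrivial everywhere-unramified subextension of $L/F$ lies in the Hilbert class field of $F$, hence already gives $p\mid\#Cl(F)$; this reduces matters to the case where $\mathfrak{p}$ is totally ramified ($k=n$). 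Even then, however, the crude estimate $\#(Cl(L)^{G})\le p\cdot\#Cl(F)$ obtained from ``ambiguous ideals are generated by extended ideals and the ramified prime'' is weaker by exactly one factor of $p$, so the unit-index refinement provided by Chevalley's formula is essential.
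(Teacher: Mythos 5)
Your argument is correct, but note that the paper itself offers no proof of this statement: it is quoted as background and attributed to Iwasawa's 1957 note, so there is no ``paper proof'' to match against. Your route through Chevalley's ambiguous class number formula is a standard and valid alternative to Iwasawa's original argument, which is purely group-theoretic: Iwasawa passes to the $p$-Hilbert class field $L'$ of $L$, observes that $G=\Gal(L'/F)$ is a $p$-group generated by the commutator subgroup together with the inertia group $I$ of the unique ramified prime (because $p\nmid \#Cl(F)$ forces the maximal abelian everywhere-unramified subextension to be trivial), invokes the Frattini argument to conclude $G=I$, and then uses $I\cap\Gal(L'/L)=1$ to kill $\Gal(L'/L)\cong A_L$. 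Each approach has its merits: Iwasawa's needs only the existence and functoriality of the Hilbert class field and elementary $p$-group theory, whereas yours imports the full ambiguous class number formula but in exchange yields the quantitative bound $v_p\bigl(\#(Cl(L)^G)\bigr)\le v_p\bigl(\#Cl(F)\bigr)$, which is strictly more information. Your two supporting observations are sound --- the archimedean places cannot ramify because the local degrees divide the odd number $p^n$ (consistent with the paper's standing hypothesis $p>3$, though Iwasawa's statement also covers $p=2$ with the infinite places counted among the ramified primes), and the fixed-point count $\#(A_L^G)\equiv\#A_L\pmod p$ for the $p$-group action correctly upgrades $p\mid\#Cl(L)$ to $p\mid\#(Cl(L)^G)$. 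The only caveat worth flagging is that the entire weight of the proof rests on Chevalley's formula, which you cite rather than prove; that is a reasonable choice, but it means your write-up is not more self-contained than simply citing Iwasawa, as the paper does.
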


From this we see that the cyclotomic invariants for $\Q$ are zero.  Indeed, given a layer $\Q_n$ of the cyclotomic $\Z_p$-extension of $\Q$, we have that $p$ is the only prime that ramifies in $\Q_n/\Q$ (Proposition 13.2 in \cite{wash}), and since $\#Cl(\Q) = 1$ it must be that $\#A(\Q_n) = 1$.  The triviality of the Iwasawa invariants of $\Q$ is an instance of Greenberg's conjecture, which predicts $\lambda = \mu = 0$ for the cyclotomic $\Z_p$-extension of a totally real number field.  In particular, the cyclotomic $\lambda$ invariant is believed to always be zero for real quadratic fields.  All of this together then leads one to study the $\Z_p$-extensions of imaginary quadratic fields.

Let $K$ be an imaginary quadratic field, and $p$ a prime.  If $\mathcal{K}/K$ is a $\Z_p$-extension of $K$, we will denote $\lambda_p(\mathcal{K}/K)$ to be the corresponding $\lambda$-invariant.  We will also denote $\lambda_p(K)$ to be the lambda invariant for the cyclotomic $\Z_p$-extension of $K$.  We have the following well known facts about imaginary quadratic fields (see, for example, Propositions 4.3 and 4.4 in \cite{sands1}):
\begin{proposition}
Suppose that $\mathcal{K}/K$ is a $\Z_p$-extension. Then we have:
\begin{itemize}
\item[1.]If $p$ is a prime that does not split in $K$, and does not divide $\#Cl(K)$, then $\mu_p(\mathcal{K}/K) = \lambda_p(\mathcal{K}/K) = 0$. 
\item[2.] If $p$ splits in $K$, and the divisors of $p$ in $K$ are totally ramified in $\mathcal{K}/K$, the $\lambda_p(\mathcal{K}/K) > 0$.
\end{itemize}
\end{proposition}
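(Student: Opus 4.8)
For part (1) the plan is to reduce to a single \emph{totally} ramified prime and then apply the theorem of Iwasawa quoted above. First recall that every $\Z_p$-extension is unramified outside $p$ (tame inertia at $\ell\neq p$ would be finite cyclic and wild inertia pro-$\ell$, neither of which embeds in the torsion-free pro-$p$ group $\Z_p$), and that $\mathcal K/K$ must ramify somewhere, since otherwise it would lie inside the finite Hilbert class field of $K$. As $p$ does not split there is a unique prime $\mathfrak p$ of $K$ above $p$, and it is the ramified one; its inertia subgroup of $\Gamma=\Gal(\mathcal K/K)\cong\Z_p$ is a nontrivial closed subgroup $\Gamma^{p^m}$, whose fixed field is the layer $K_m$. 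Since $K_m/K$ is unramified at $\mathfrak p$ and at every other prime, it is everywhere unramified, so $p^m\mid\#Cl(K)$; the hypothesis $p\nmid\#Cl(K)$ forces $m=0$, i.e. $\mathfrak p$ is totally ramified in $\mathcal K/K$. Then for each $n$ the cyclic $p$-power extension $K_n/K$ has exactly one ramified prime, so Iwasawa's theorem shows $p\mid\#Cl(K_n)$ would imply $p\mid\#Cl(K)$; as this fails, $A_n=0$ for all $n$, whence $\mu_p(\mathcal K/K)=\lambda_p(\mathcal K/K)=0$.

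For part (2), write $p\mathcal{O}_K=\mathfrak p_1\mathfrak p_2$, so $\mathcal K/K$ now has (at least) two ramified primes, both totally ramified; set $\Gamma_n=\Gal(K_{n+1}/K_n)$. The plan is to show $\#A_n$ is unbounded and then deduce $\lambda_p(\mathcal K/K)>0$ using that $\mu_p(\mathcal K/K)=0$ — for the cyclotomic extension (the case needed for the main theorem) this is Ferrero--Washington as recalled above, since $K/\Q$ is abelian, and it is also known for split $p$ in general. To get the growth I would apply Chevalley's ambiguous class number formula to the cyclic degree-$p$ extension $K_{n+1}/K_n$, whose ramified primes are the two primes above $p$, each with ramification index $p$:
\[
\#\bigl(A_{n+1}^{\Gamma_n}\bigr)=\#A_n\cdot\frac{p\cdot p}{p\cdot[E_{K_n}:E_{K_n}\cap N_{K_{n+1}/K_n}K_{n+1}^{\times}]}.
\]
At the bottom level the unit index is $1$, because $E_K$ is finite of order prime to $p$ — this is precisely where the imaginary quadratic hypothesis is used — so $\#A_1\ge\#\bigl(A_1^{\Gamma_0}\bigr)=p\,\#A_0>\#A_0$. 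Moreover, because all ramified primes are totally ramified, the natural maps $A_n\to A_{n+1}$ are injective, so $\#A_n$ is nondecreasing and $A_0\subsetneq A_1$.

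The hard part will be to propagate this growth up the tower, i.e. to rule out that $\#A_n$ stabilizes at some higher level. The cleanest route is to analyze the Iwasawa module $X=\varprojlim_n A_n=\Gal(L_\infty/\mathcal K)$ directly, where $L_n$ is the $p$-Hilbert class field of $K_n$ and $L_\infty=\bigcup_n L_n$: the two totally ramified primes produce a ``ramification defect'' in $X$, namely the discrepancy between the two inertia subgroups inside $\Gal(L_\infty/K)\cong X\rtimes\Gamma$, and one must show that because $E_K$ is finite this defect cannot be absorbed, so that $X$ is infinite; combined with $\mu_p=0$ this gives $\lambda_p(\mathcal K/K)=\mathrm{rank}_{\Z_p}X>0$. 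Alternatively, for the cyclotomic extension one can invoke the Iwasawa Main Conjecture for $K$: when $p$ splits, the relevant $p$-adic $L$-function vanishes at $s=0$ because its Euler factor at $p$ carries the factor $1-\chi_K(p)=0$, which forces $\lambda_p>0$. I would carry out this step following \cite{sands1}.

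In summary, the main obstacle is concentrated entirely in part (2): establishing that the class numbers keep growing (equivalently, that $X$ is infinite) once two primes are totally ramified, together with the input $\mu_p(\mathcal K/K)=0$. Part (1), and the set-up and the bottom-level computation in part (2), are routine manipulations with class field theory, ramification in $\Z_p$-extensions, and Chevalley's formula.
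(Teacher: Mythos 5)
The paper offers no proof of this proposition at all --- it is quoted as a known fact with a citation to Propositions 4.3 and 4.4 of \cite{sands1} --- so your attempt can only be measured against the standard arguments behind those results. Your part (1) is correct and complete: reducing to the case where the unique prime above $p$ is totally ramified (via the Hilbert class field and $p \nmid \#Cl(K)$) and then applying the quoted theorem of Iwasawa to each layer $K_n/K$ is exactly the standard route, and it yields the stronger conclusion $A_n = 0$ for all $n$.

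Part (2) contains a genuine gap, which you partly acknowledge. Chevalley's formula applied to $K_1/K$ does give $\#A_1 \geq p\,\#A_0$, but applied to $K_{n+1}/K_n$ for $n \geq 1$ it gives nothing: $E_{K_n}$ is then an infinite unit group, and the index $[E_{K_n} : E_{K_n} \cap N_{K_{n+1}/K_n}K_{n+1}^{\times}]$ can be a large power of $p$, so unbounded growth of $\#A_n$ is never established --- this is precisely the step you defer as ``the hard part.'' (The painless repair is to apply Chevalley's formula to the cyclic extension $K_n/K$ itself rather than to consecutive layers: the base field is then always $K$, whose unit group is finite of order prime to $p$, so the unit index is $1$ and one gets $\#A_n \geq p^n \#A_0$ with no propagation needed.) Two supporting assertions are also problematic. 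First, the vanishing of $\mu_p(\mathcal{K}/K)$ for an \emph{arbitrary} $\Z_p$-extension of $K$ is not known when $p$ splits; Ferrero--Washington \cite{fw} covers only the cyclotomic one, and the theorem of Sands quoted in this very paper proves $\mu = 0$ for all $\Z_p$-extensions only under the extra hypotheses $p \nmid h_K$ and $\lambda_p(K) \leq 2$. Since $\#A_n \geq p^n$ is perfectly compatible with $\lambda = 0$ and $\mu \geq 1$, your growth argument cannot close without a $\mu = 0$ input that is unavailable in the generality of the statement. Second, total ramification does not make the extension maps $A_n \to A_{n+1}$ injective (capitulation is not excluded by ramification; what total ramification gives is surjectivity of the norm maps $A_{n+1} \to A_n$), though this claim is not load-bearing. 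The standard proof of (2) sidesteps $\mu$ entirely: letting $\tilde{K}$ be the compositum of all $\Z_p$-extensions of $K$, each inertia group $I_j \subseteq \Gal(\tilde{K}/K) \cong \Z_p^2$ of a prime above $p$ is a quotient of the pro-$p$ part of $\Z_p^{\times}$, hence isomorphic to $\Z_p$; total ramification forces $I_j$ to surject onto, hence map isomorphically onto, $\Gal(\mathcal{K}/K)$, so $I_j \cap \Gal(\tilde{K}/\mathcal{K}) = 1$ and $\tilde{K}/\mathcal{K}$ is an everywhere-unramified $\Z_p$-extension of $\mathcal{K}$. Therefore $X = \varprojlim A_n$ surjects onto $\Z_p$ and $\lambda_p(\mathcal{K}/K) = \mathrm{rank}_{\Z_p} X \geq 1$.
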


Next, we have the powerful criterion of Gold, which will be a main ingredient in the proof of our main result:

\begin{theorem}[Gold's criterion \cite{gold}] \label{gold}
Let $K$ be an imaginary quadratic field, and $p > 3$ be a prime such that $p$ does not divide the class number $h_K$ of $K$.  Suppose $p\mO_K = \frak{p}\bar{\frak{p}}$ and write $\frak{p}^{h_K} = (\alpha)$. Then $\lambda_p(K) > 1$ if and only if $\alpha^{p-1} \equiv 1 \Mod{\bar{\frak{p}}^2}$.
\end{theorem}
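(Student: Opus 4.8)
\smallskip
\noindent\emph{Step 1: reduction to the first layer $K_1$.} The plan is to push the question down to $K_1$, the first layer of the cyclotomic $\Z_p$-extension of $K$. Since $K/\Q$ is abelian, Ferrero--Washington \cite{fw} gives $\mu_p(K)=0$; since $p$ splits and the primes above $p$ are totally ramified in the cyclotomic tower, the Proposition above gives $\lambda_p(K)\ge 1$; and $A_0=A(K)=0$ because $p\nmid h_K$. Write $p\mO_K=\p\bar\p$, put $\Lambda=\Z_p[[T]]$ and $X=\varprojlim A_n$, and use the standard description of $A_n$ as a quotient of $X$ for a $\Z_p$-extension with two totally ramified primes (Washington \cite{wash}, Ch.~13): choosing $\sigma_1,\sigma_2\in\Gal(L_\infty/K)$ in the inertia groups of $\p,\bar\p$ and both mapping to a fixed topological generator $\gamma$ of $\Gal(K_\infty/K)$, and setting $x:=\sigma_2\sigma_1^{-1}\in X$, one has $A_0\cong X/(TX+\Z_p x)$ and $A_1\cong X/\big(\omega_1 X+\Z_p\,\nu x\big)$, where $\nu:=((1+T)^p-1)/T$ and $\omega_1=\nu T$. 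From $A_0=0$ we get $X=TX+\Z_p x$, so $X/TX$ is cyclic over $\Z_p$, so $X$ is cyclic over $\Lambda$ (Nakayama); since $\mu=0$, write $X\cong\Lambda/(g)$ with $g$ a distinguished polynomial, $\deg g=\lambda_p(K)$. Substituting $X=TX+\Z_p x$ collapses the formula for $A_1$ to $A(K_1)\cong X/\nu X\cong\Lambda/(g,\nu)$. As $A(K_1)$ is finite, $g$ and $\nu$ are coprime, so $\#A(K_1)=\#\big(\Z_p/\mathrm{Res}(g,\nu)\big)$; and since $\nu$ is the Eisenstein minimal polynomial of $\zeta_p-1$, a Newton-polygon estimate of $\mathrm{Res}(g,\nu)=\prod_{\zeta^p=1,\,\zeta\neq1}g(\zeta-1)$ gives $\#A(K_1)=p^{\lambda_p(K)}$ when $\lambda_p(K)<p-1$ and $\#A(K_1)\ge p^{\,p-1}\ge p^2$ otherwise. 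In all cases $\lambda_p(K)>1\iff\#A(K_1)\ge p^2$.

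\smallskip
\noindent\emph{Step 2: genus theory and an auxiliary $\Z_p$-extension.} The $p$-part of the ambiguous class number formula for $K_1/K$---degree $p$, ramified exactly at $\p,\bar\p$ with $e=p$, trivial unit index since $\#\mO_K^\times$ divides $6$ and is prime to $p$---gives $\#A(K_1)^{\Gal(K_1/K)}=p$, and complex conjugation shows this group is generated by $[\bar{\mathfrak P}]$, the class of the prime of $K_1$ above $\bar\p$. Hence $\#A(K_1)=p\cdot\#(\gamma-1)A(K_1)$, so $\lambda_p(K)>1\iff(\gamma-1)A(K_1)\neq0$; and since $A(K_1)$ is $\Lambda$-cyclic with $T$-torsion generated by $[\bar{\mathfrak P}]$, a short argument gives $(\gamma-1)A(K_1)\neq0\iff[\bar{\mathfrak P}]\in(\gamma-1)A(K_1)$. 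Now let $M/K$ be the $\Z_p$-extension ramified only at $\p$ (unique, by class field theory and $p\nmid h_K$), with first layer $M_1$: degree $p$, conductor $\p^2$, unramified at $\bar\p$. A ramification count---crucially, $\p$ has the same completion in $K_1$ as in $M_1$---shows $K_1M_1/K_1$ is unramified everywhere of degree $p$, so $K_1M_1$ is the genus field of $K_1$, i.e.\ the fixed field of $(\gamma-1)A(K_1)$ inside the $p$-Hilbert class field. Comparing Frobenius elements along the isomorphism $\Gal(K_1M_1/K_1)\cong\Gal(M_1/K)$ (the residue degree of $\bar{\mathfrak P}$ over $\bar\p$ is $1$) shows that $[\bar{\mathfrak P}]\in(\gamma-1)A(K_1)$ if and only if $\bar\p$ splits completely in $M_1/K$.

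\smallskip
\noindent\emph{Step 3: the local computation.} It remains to compute $\Frob(\bar\p,M_1/K)$ by class field theory. Because $M_1$ has conductor $\p^2$, $p\nmid h_K$, and $\#\mO_K^\times$ is prime to $p$, the Artin map identifies $\Gal(M_1/K)$ with the $p$-part of the ray class group of conductor $\p^2$, which is $(1+\p)/(1+\p^2)\cong\Z/p$. Using $\bar\p^{\,h_K}=(\bar\alpha)$ and $p\nmid h_K$, the element $\Frob(\bar\p,M_1/K)$ is trivial if and only if the image of $\bar\alpha$ in $(1+\p)/(1+\p^2)$, after killing the Teichm\"uller component, is trivial, i.e.\ if and only if $\bar\alpha^{\,p-1}\equiv1\Mod{\p^2}$; applying complex conjugation turns this into $\alpha^{p-1}\equiv1\Mod{\bar\p^2}$. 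Chaining the equivalences of Steps~1--3 proves the theorem.

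\smallskip
\noindent\emph{Where the difficulty lies.} The two delicate points are, first, establishing in Step~2 that the failure of $\lambda_p(K)=1$ is \emph{always} detected by the splitting of the single prime $\bar\p$ in the single degree-$p$ field $M_1$---this rests on the $\Lambda$-cyclicity of $X$ and on the precise shape of $\nu$---and, second, the class field theory bookkeeping in Step~3: the conductor of $M_1$, the harmless role of $\mO_K^\times$, and the passage to the completion at $\bar\p$. It is exactly the norm-group computation there---that the degree-$p$ subextension of $\Q_p(\zeta_{p^2})/\Q_p$ has norm group $\mu_{p-1}(1+p^2\Z_p)$ inside $\Z_p^\times$---that produces the exponent $2$ in $\bar\p^2$; equivalently, one may phrase the whole argument at infinite level, saying that $\lambda_p(K)>1$ if and only if the decomposition group of $\bar\p$ in $M/K$ lies in $p\,\Gal(M/K)$, i.e.\ if and only if $\alpha$ is a norm from that local extension.
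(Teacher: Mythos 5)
The paper does not actually prove this theorem: it is imported verbatim from Gold \cite{gold}, and the only piece of its proof the paper re-uses is Proposition \ref{prop1} (``$\lambda_p(K)>1$ iff $\frak{p}$ splits in the $p$-ray class field of $K$ modulo $\bar{\frak{p}}^2$''), which is justified by pointing back to Gold's Theorems 3 and 4. Your proposal is therefore best read as a reconstruction of Gold's original argument, and as such it is essentially correct and follows the standard route: reduce to $\#A(K_1)\ge p^2$ via the cyclic Iwasawa module $X$ and the resultant with $\nu=((1+T)^p-1)/T$; convert that to $(\gamma-1)A(K_1)\neq 0$ via the $p$-part of the ambiguous class number formula; identify the genus field of $K_1/K$ with $K_1M_1$; and finish with the ray-class computation at conductor $\p^2$. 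The statement you reach at the end of Step 2 ($\bar{\frak{p}}$ splits completely in $M_1/K$) is, after applying complex conjugation, exactly the waypoint the paper itself uses in Proposition \ref{prop1}, and your Step 3 is the same class field theory translation that produces the exponent $2$ and the conjugate ideal $\bar{\frak{p}}^2$ in the final congruence.

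Three places where your sketch leans on facts you should make explicit. (i) In Step 1, the direction $\lambda\ge 2\Rightarrow\#A(K_1)\ge p^2$ survives even if $X$ is only $\Lambda$-cyclic, but the converse ($\lambda=1\Rightarrow\#A(K_1)=p$) genuinely needs $X\cong\Lambda/(g)$ with $(g)$ the characteristic ideal, i.e.\ that $X$ has no nonzero finite $\Lambda$-submodules; this is standard given $A_0=0$ and total ramification of both primes, but it is the fact doing the work there and should be cited. (ii) The parenthetical ``$\p$ has the same completion in $K_1$ as in $M_1$'' asserts more than you have justified (equality of completions would already force the prime above $\p$ to split in $K_1M_1/K_1$); what you need, and what is true, is only that the inertia subgroup of $\p$ in $\Gal(K_1M_1/K)\cong(\Z/p\Z)^2$ is cyclic of order $p$, being the image of $\Z_p^{\times}$ in a group of exponent $p$, so that $e_{\p}(K_1M_1/K)=p=e_{\p}(K_1/K)$ and $K_1M_1/K_1$ is unramified above $\p$. (iii) ``Complex conjugation shows $A(K_1)^{G}$ is generated by $[\bar{\mathfrak P}]$'' needs two inputs you don't name: that ambiguous classes coincide with strongly ambiguous classes (again because $\#\mO_K^{\times}$ divides $6$ and is prime to $p$), and that $A(K_1)^{+}=0$ (because $A(\Q_1)=0$ and $p$ is odd), which gives $[\mathfrak P]=[\bar{\mathfrak P}]^{-1}$ and hence a single generator of order exactly $p$. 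With these filled in, the chain of equivalences closes and the argument is a faithful proof of Gold's criterion.
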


For a fixed imaginary quadratic field $K$, Theorem \ref{gold} can be used to efficiently detect primes $p$ for which $\lambda_p(K) >1$, as carried out by Dummit, Ford, Kisilevsky and Sands in \cite{dfks}.  Their computations suggest that the cyclotomic $\lambda$-invariants for an imaginary quadratic field are rarely greater than one (see the Table 1 at the end of \cite{dfks}).   In another application, for a fixed prime $p$, Sands applies Theorem \ref{gold} in \cite{sands1} to construct infinitely many imaginary quadratic fields $K$ such that $\lambda_p(K) > 1$.  There is a sort of dual statement to Sands result that one may wonder about:  Given a fixed imaginary quadratic field $K$, are there infinitely many primes $p$ with $\lambda_p(K) > 1$?  This was conjectured to be true in \cite{dfks}, but is currently an open question.

There is also the following result of Sands which shows that the cyclotomic $\Z_p$-extension may tell us a lot about the other $\Z_p$-extensions of $K$:
\begin{theorem}[Sands \cite{sands1}]
Suppose that $p$ does not divide $h_K$, and $\lambda_p(K) \leq 2$.  Then each $\Z_p$-extension $\mathcal{K}/K$  has $\mu_p(\mathcal{K}/K) = 0$.  We also have
\begin{itemize}
\item[a.] If $\lambda_p(K) = 0$, then $\lambda_p(\mathcal{K}/K) = 0$ for each $\Z_p$-extension $\mathcal{K}/K$.
\item[b.] If $\lambda_p(K) = 1$, then $\lambda_p(\mathcal{K}/K) = 1$ for each $\Z_p$-extension $\mathcal{K}/K$ in which each prime dividing $p$ is totally ramified.
\item[c.] If $\lambda_p(K) = 2$, then $1\leq \lambda_p(\mathcal{K}/K) \leq 2$ for each $\Z_p$-extension $\mathcal{K}/K$ in which each prime dividing $p$ is totally ramified.
\end{itemize}
\end{theorem}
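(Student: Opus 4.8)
The plan is to organize all the $\Z_p$-extensions of $K$ into a single two-variable Iwasawa module and to recover each $\lambda$- and $\mu$-invariant by specialization. First suppose $p$ does not split in $K$: then part 1 of the preceding Proposition already gives $\mu_p(\mathcal{K}/K)=\lambda_p(\mathcal{K}/K)=0$ for \emph{every} $\Z_p$-extension $\mathcal{K}/K$, which proves the $\mu$-assertion and statement (a), and makes (b), (c) vacuous since then $\lambda_p(K)=0$. If instead $p$ splits, part 2 of that Proposition gives $\lambda_p(K)>0$, so (a) is vacuous. So assume from now on $p\mO_K=\mathfrak{p}\bar{\mathfrak{p}}$ with $\lambda_p(K)\in\{1,2\}$. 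Let $\tilde K$ be the compositum of all $\Z_p$-extensions of $K$, set $\Gamma=\Gal(\tilde K/K)\cong\Z_p^{2}$ and $\Lambda=\Z_p[[\Gamma]]\cong\Z_p[[S,T]]$, and let $X$ be the Galois group over $\tilde K$ of its maximal unramified abelian pro-$p$ extension, a $\Lambda$-module. I will use that $X$ is finitely generated and torsion over $\Lambda$, that its two-variable $\mu$-invariant vanishes (known for imaginary quadratic $K$), and that $X$ has no nonzero pseudo-null $\Lambda$-submodule (a consequence of $p\nmid h_K$). Writing the characteristic ideal as $(F)$ with $F\in\Lambda$ we have $p\nmid F$; put $\bar F=F\bmod p\in\F_p[[S,T]]\setminus\{0\}$ and let $\ell_0=\ord(\bar F)$ be the degree of its lowest homogeneous part $\bar F_{\ell_0}$. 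Since $\lambda_p(K)>0$ forces $X\neq 0$, we have $\ell_0\geq 1$.

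Every $\Z_p$-extension $\mathcal{K}/K$ inside $\tilde K$ corresponds to a procyclic direct summand $\Gamma_\mathcal{K}=\Gal(\tilde K/\mathcal{K}_\infty)$ of $\Gamma$, hence to a line $\bar\ell_\mathcal{K}$ in $\Gamma/p\Gamma$; similarly the inertia subgroups of $\mathfrak{p}$ and $\bar{\mathfrak{p}}$ in $\tilde K/K$ are of $\Z_p$-rank one (local class field theory over $\Q_p$) and determine lines $\bar\ell_{\mathfrak{p}},\bar\ell_{\bar{\mathfrak{p}}}$. A short inertia computation shows that $\mathfrak{p}$ is totally ramified in $\mathcal{K}_\infty/K$ exactly when $\bar\ell_\mathcal{K}\neq\bar\ell_{\mathfrak{p}}$; thus the hypothesis in (b), (c) is precisely $\bar\ell_\mathcal{K}\notin\{\bar\ell_{\mathfrak{p}},\bar\ell_{\bar{\mathfrak{p}}}\}$, a condition that holds for the cyclotomic line. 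When both primes above $p$ are totally ramified in $\mathcal{K}_\infty/K$, Iwasawa's control theorem is exact up to finite groups; combined with the absence of a pseudo-null submodule this yields a pseudo-isomorphism $X/(\sigma_\mathcal{K}-1)X\sim \Z_p[[\Gal(\mathcal{K}_\infty/K)]]/(f_\mathcal{K})$, where $f_\mathcal{K}$ is the image of $F$ under the specialization map, so $\mu_p(\mathcal{K}/K)$ and $\lambda_p(\mathcal{K}/K)$ equal the $\mu$- and $\lambda$-invariants of $f_\mathcal{K}$. Since that map sends $S\mapsto c_1\tau+O(\tau^{2})$ and $T\mapsto c_2\tau+O(\tau^{2})$ for a nonzero vector $(\bar c_1,\bar c_2)$ attached to $\bar\ell_\mathcal{K}$, the reduction $f_\mathcal{K}\bmod p$ has leading term $\bar F_{\ell_0}(\bar c_1,\bar c_2)\tau^{\ell_0}$; hence $\mu_p(\mathcal{K}/K)=0$ and $\lambda_p(\mathcal{K}/K)=\ell_0$ whenever $\bar\ell_\mathcal{K}$ is not a zero direction of $\bar F_{\ell_0}$.

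The crux is then a genus-theoretic statement: the zero directions of $\bar F_{\ell_0}$ are contained in $\{\bar\ell_{\mathfrak{p}},\bar\ell_{\bar{\mathfrak{p}}}\}$. The mechanism is that the cotangent data of $X$ at the trivial character is governed by the inertia of the ramified primes above $p$, and since $p\nmid h_K$ the class group of $K$ contributes only a unit; one deduces that $\bar F_{\ell_0}$ is, up to a unit, a monomial in the two linear forms defining $\bar\ell_{\mathfrak{p}}$ and $\bar\ell_{\bar{\mathfrak{p}}}$ — in particular $\ell_0\le 2$. Granting this the theorem drops out: any $\mathcal{K}/K$ with $\mathfrak{p},\bar{\mathfrak{p}}$ totally ramified has $\bar\ell_\mathcal{K}\notin\{\bar\ell_{\mathfrak{p}},\bar\ell_{\bar{\mathfrak{p}}}\}$, hence is not a zero direction of $\bar F_{\ell_0}$, so $\mu_p(\mathcal{K}/K)=0$ and $\lambda_p(\mathcal{K}/K)=\ell_0$; applying this to $\mathcal{K}=K^{\mathrm{cyc}}$ gives $\ell_0=\lambda_p(K)$, whence $\lambda_p(\mathcal{K}/K)=\lambda_p(K)$ for all such $\mathcal{K}$, which is (b) and (c) simultaneously. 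The $\mu$-assertion for the $\Z_p$-extensions failing the total-ramification hypothesis requires a separate argument: one shows that $p\nmid h_K$ forces $\bar F$ to be coprime to every specialization kernel, so that no $f_\mathcal{K}$ is divisible by $p$.

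I expect the two real obstacles to be (i) the genus-theoretic identification of $\bar F_{\ell_0}$ with a monomial in the linear forms attached to $\mathfrak{p}$ and $\bar{\mathfrak{p}}$ — equivalently, ruling out any extra bad direction once $p\nmid h_K$, which is exactly what forces $\ell_0\le 2$ and pins down where the $\lambda$-invariant can increase; and (ii) the $\mu=0$ statement over all $\Z_p$-extensions, i.e.\ that $p\nmid h_K$ keeps $\bar F$ free of every specialization kernel. A smaller technical point is the exactness of the control theorem: the formation of $\Gamma_\mathcal{K}$-coinvariants must be compatible, up to finite error, with the pseudo-isomorphism $X\sim\Lambda/(F)$, and this is precisely where $p\nmid h_K$ (through the pseudo-null statement) and the total-ramification hypothesis of (b), (c) are used — dropping the latter genuinely changes the answer.
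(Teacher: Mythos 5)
First, a point of orientation: the paper does not prove this statement at all --- it is quoted as background from Sands \cite{sands1}, so there is no in-paper proof to compare against. Your framework (packaging all $\Z_p$-extensions into a module $X$ over $\Lambda\cong\Z_p[[S,T]]$ and reading off each $\lambda_p(\mathcal{K}/K)$ from the specialization of a characteristic power series $F$ along the line attached to $\mathcal{K}$) is indeed the right setting and is essentially the one Sands works in. But as written the argument has two genuine gaps, and one of them is visible from the shape of your conclusion.

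The first gap is the one you flag yourself: the claim that the zero directions of the leading form $\bar F_{\ell_0}$ are contained in $\{\bar\ell_{\mathfrak{p}},\bar\ell_{\bar{\mathfrak{p}}}\}$, equivalently that $\bar F_{\ell_0}$ is a monomial in the two corresponding linear forms, is asserted rather than proved, and the whole theorem rests on it. Moreover the logic around it is circular as stated: the bound $\ell_0\le 2$ does not come from genus theory, it comes from the hypothesis $\lambda_p(K)\le 2$ together with $\lambda_p(K)\ge\ell_0$. The second gap is the assertion that the control theorem is ``exact up to finite groups'' for a totally ramified $\mathcal{K}_\infty\subset\tilde K$. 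It is not: when both primes above $p$ are totally ramified in $\mathcal{K}_\infty/K$, their inertia subgroups of $\Gamma$ meet $\Gamma'=\Gal(\tilde K/\mathcal{K}_\infty)$ trivially, so $\tilde K/\mathcal{K}_\infty$ is an \emph{unramified} abelian pro-$p$ extension and $X_{\mathcal{K}}$ already surjects onto $\Gamma'\cong\Z_p$; the discrepancy between $X_{\mathcal{K}}$ and the coinvariants $X_{\Gamma'}$ is a $\Z_p$-module of positive rank, not a finite group. This rank-one slippage is exactly why part (c) of Sands' theorem is the inequality $1\le\lambda_p(\mathcal{K}/K)\le 2$ rather than an equality. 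Your argument instead outputs $\lambda_p(\mathcal{K}/K)=\ell_0=\lambda_p(K)$ for every totally ramified $\mathcal{K}$, i.e.\ it proves a strictly stronger statement than the theorem in case (c) --- which should have been a warning sign that a correction term had been dropped. Finally, the $\mu=0$ assertion for the $\Z_p$-extensions in which the primes above $p$ are \emph{not} totally ramified is dispatched in one sentence (``$\bar F$ is coprime to every specialization kernel''), but $\bar F\neq 0$ only guarantees this for all but finitely many lines; ruling out the finitely many exceptional ones is a real step and again needs the hypothesis $p\nmid h_K$ and $\lambda_p(K)\le 2$.
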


\subsection{Statement of Main Result}
Let $K$ be an imaginary quadratic field, $p$ a prime such that $p$ splits in $K$ as $p\mO_K = \frak{p}\bar{\frak{p}}$, and denote $\lambda_p(K)$ to be Iwasawa's $\lambda$-invariant for the cyclotomic $\Z_p$-extension of $K$.  Recall that in this situation $\lambda_p(K) > 0$.  We will consider an elliptic curve $E$ that has complex multiplication by $\mO_K$, i.e. $\End(E) \cong \mO_K$.  Let $j(E)$ be the $j$-invariant of $E$, and $\mathfrak{P}$ be a prime of $K(j(E))$ (the Hilbert class field of $K$) above $\mathfrak{p}$ for which $E$ has good reduction.  Denote $\tilde{E}$ to be the reduction of $E$ modulo $\mathfrak{P}$.  Here is the main result of this paper, which we will prove in Section \ref{proof}:

\begin{theorem}\label{thrm1}
With the notation fixed above, we have $\lambda_p(K) > 1$ if and only if $\# \tilde{E}(\F_{q}) \equiv 0 \Mod{p^2}$, where $q = p^{p-1}$.
\end{theorem}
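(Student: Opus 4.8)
The plan is to connect Gold's criterion (Theorem~\ref{gold}) with the point count of $\tilde E$ through Deuring's description of the reduction of a CM elliptic curve; writing $\mathfrak p^{h_K}=(\alpha)$ as in Theorem~\ref{gold}, it suffices to prove that $\alpha^{p-1}\equiv 1\pmod{\bar{\mathfrak p}^{2}}$ if and only if $p^{2}\mid\#\tilde E(\F_{q})$. First I would identify the Frobenius: since $p$ splits in $K$, the curve $\tilde E$ has good \emph{ordinary} reduction at $\mathfrak P$, so $\End(\tilde E)$ is an order in $K$ containing the (injective) image of $\mathcal O_K=\End(E)$ and hence equals $\mathcal O_K$. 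Let $\phi\in\mathcal O_K$ be the Frobenius endomorphism of $\tilde E$ over its field of definition $k(\mathfrak P)$, a field with $p^{f}$ elements, where $f$ is the residue degree of $\mathfrak P$ over $p$ (equivalently the order of $[\mathfrak p]$ in $Cl(K)$). By the theory of complex multiplication together with Deuring's reduction theorem, $\phi$ generates the ideal $\Nrm_{K(j(E))/K}(\mathfrak P)=\mathfrak p^{f}$ (after labelling $\mathfrak p$ versus $\bar{\mathfrak p}$ so that this holds; the other labelling only replaces Gold's congruence by its complex conjugate, an equivalent condition) and $\phi\bar\phi=p^{f}$. Counting $\F_{q}$-points for $q=p^{p-1}$ then brings in the $q$-power Frobenius endomorphism $\pi_{q}\in\mathcal O_K$ with $\pi_{q}\bar\pi_{q}=q$, so that $(\pi_{q})=\mathfrak p^{p-1}$ and
\[
\#\tilde E(\F_{q}) = \deg(1-\pi_{q}) = \Nrm_{K/\Q}(1-\pi_{q}) = 1 - \Tr_{K/\Q}(\pi_{q}) + p^{p-1}.
\]

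Next I would reduce the divisibility to a congruence on $\pi_{q}$. Since $p-1\ge 2$ we have $p^{p-1}\equiv 0\pmod{p^{2}}$, so $p^{2}\mid\#\tilde E(\F_{q})$ if and only if $\Tr_{K/\Q}(\pi_{q})=\pi_{q}+\bar\pi_{q}\equiv 1\pmod{p^{2}}$, that is, simultaneously modulo $\mathfrak p^{2}$ and modulo $\bar{\mathfrak p}^{2}$. Because $(\pi_{q})=\mathfrak p^{p-1}\subseteq\mathfrak p^{2}$ and $(\bar\pi_{q})=\bar{\mathfrak p}^{p-1}\subseteq\bar{\mathfrak p}^{2}$, these two congruences collapse to $\bar\pi_{q}\equiv 1\pmod{\mathfrak p^{2}}$ and $\pi_{q}\equiv 1\pmod{\bar{\mathfrak p}^{2}}$, which are complex conjugates of one another; hence $p^{2}\mid\#\tilde E(\F_{q})$ if and only if $\pi_{q}\equiv 1\pmod{\bar{\mathfrak p}^{2}}$.

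It remains to match $\pi_{q}\equiv 1\pmod{\bar{\mathfrak p}^{2}}$ with Gold's congruence $\alpha^{p-1}\equiv 1\pmod{\bar{\mathfrak p}^{2}}$, and I expect this to be the main obstacle. I would work in $G:=(\mathcal O_K/\bar{\mathfrak p}^{2})^{\times}$, which is cyclic of order $p(p-1)$ because $\bar{\mathfrak p}$ has residue field $\F_{p}$; both $\alpha$ and $\pi_{q}$ are prime to $\bar{\mathfrak p}$ and so have images in $G$. Two inputs are used. First, since $p>3$ the unit group $\mathcal O_K^{\times}$ has order dividing $p-1$ (the only cases needing attention are $K=\Q(i)$ and $\Q(\sqrt{-3})$, where $h_K=1$ and the fact that $p$ splits forces $4$, resp.\ $6$, to divide $p-1$); consequently the reduction $\mathcal O_K^{\times}\to\F_{p}^{\times}$ is injective, every unit is annihilated by the $(p-1)$-st power, and Gold's congruence is independent of the chosen generator $\alpha$ of $\mathfrak p^{h_K}$. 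Second, $(\pi_{q})^{h_K}=\mathfrak p^{h_K(p-1)}=(\alpha^{p-1})$ gives $\pi_{q}^{h_K}=u\,\alpha^{p-1}$ for some unit $u\in\mathcal O_K^{\times}$. Using the hypothesis $p\nmid h_K$ — so that $x\mapsto x^{h_K}$ is a bijection of the $p$-Sylow subgroup of $G$ — and tracking $u$ through reduction modulo $\bar{\mathfrak p}$, one should be able to reconcile the two congruences; the delicate point is to disentangle the $p$-part and the prime-to-$p$ (Teichmüller) part of $G$ and to verify that the unit $u$ cannot obstruct the equivalence. Granting this, $p^{2}\mid\#\tilde E(\F_{q})\iff\alpha^{p-1}\equiv 1\pmod{\bar{\mathfrak p}^{2}}$, and Theorem~\ref{gold} completes the proof.
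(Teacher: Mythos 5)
Your reduction of $p^2 \mid \#\tilde E(\F_q)$ to the congruence $\pi_q \equiv 1 \Mod{\bar{\mathfrak{p}}^2}$ is correct, and it is a genuinely different route from the paper's: the paper never touches the trace of Frobenius, but instead realizes the $p$-ray class field modulo $\bar{\mathfrak{p}}^2$ inside $K(j(E),E[\bar{\mathfrak{p}}^2])$ and uses the reduction isomorphism $E[\bar{\mathfrak{p}}^2]\cong\tilde E[p^2]$ of Lemma \ref{lem1} to translate triviality of $\sigma_{\mathfrak{p}}$ into $\tilde E[p^2]\subseteq\tilde E(\F_q)$. When $h_K=1$ your argument closes completely: there $\phi=v\alpha$ for a unit $v$, so $\pi_q=\phi^{p-1}=\alpha^{p-1}$ on the nose (the unit dies because $\#\mO_K^{\times}$ divides $p-1$, as you observe), and Gold's criterion finishes the proof; this already covers every numerical example in the paper.

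The step you defer, however, is a genuine gap for $h_K>1$, and it cannot be closed by the relation $\pi_q^{h_K}=u\,\alpha^{p-1}$ alone. In $G=(\mO_K/\bar{\mathfrak{p}}^2)^{\times}\cong\Z/p\times\Z/(p-1)$, Gold's condition is exactly the vanishing of the $\Z/p$-component of $\pi_q$ (this is where $p\nmid h_K$ enters), whereas $\pi_q\equiv 1$ also requires the $\Z/(p-1)$-component to vanish; your relation only says that $h_K$ times that component is the image of a unit, which is vacuous on the $\gcd(h_K,p-1)$-torsion of $\Z/(p-1)$. Moreover the prime-to-$p$ component really is not determined by the ideal $(\pi_q)=\mathfrak{p}^{p-1}$: for $K=\Q(\sqrt{-23})$, $p=13$, one has $\mathfrak{p}^3=(37+6\sqrt{-23})$, so $\phi\equiv\pm 9\Mod{\bar{\mathfrak{p}}}$, an element of order $3$ (resp.\ $6$) in $\F_{13}^{\times}$, whence $\pi_q=\phi^{4}\equiv 9\not\equiv 1\Mod{\bar{\mathfrak{p}}}$ even though $\alpha^{12}\equiv 1\Mod{\bar{\mathfrak{p}}}$ holds trivially. (In that example $\lambda_{13}=1$, so both sides of the theorem are false and nothing breaks, but it shows the component you need to kill is not controlled by your inputs.) What is missing is precisely the class-field-theoretic/CM input the paper uses: $\Gal(K(E[\bar{\mathfrak{p}}^2])/K)$ embeds in $(\mO_K/\bar{\mathfrak{p}}^2)^{\times}$, so once $\sigma_{\mathfrak{p}}$ is trivial on the degree-$p$ quotient the full order of the Frobenius acting on $E[\bar{\mathfrak{p}}^2]$ divides $p-1$, which is exactly the statement that the prime-to-$p$ component of $\pi_q$ vanishes whenever the $p$-component does. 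Without some such control on $\phi$ modulo $\bar{\mathfrak{p}}$ itself, beyond the ideal it generates, the equivalence does not follow.
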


The idea of Theorem \ref{thrm1} comes from the proof of Gold's criterion (\cite{gold}, Theorem 4).  In that proof, Gold argues that $\lambda_p(K) > 1$ if and only if $\frak{p}$ splits in a certain ray-class field modulo $\bar{\frak{p}}^2$ over $K$.  We may explicitly realize such fields via the theory of complex multiplication, and we then draw out the implications for the curve $E$ when $\frak{p}$ splits in this ray-class field extension of $K$.

In Section \ref{applications} we give a few applications of Theorem \ref{thrm1}.  First, we obtain some interesting corollaries for the curves $y^2 = x^3 -1$ and $y^2 = x^3 + x$ (see Propositions \ref{thrm2}, \ref{thrm3} and Corollary \ref{cor1}).  We then give another condition for $\lambda_p(K) > 1$, this time involving a character sum whose terms include the quadratic character $\psi_q : \F_q \to \{\pm 1\}$ (see Proposition \ref{prop2}).  Last, given a fixed prime $p$, we apply Theorem \ref{thrm1} to identify a family of imaginary quadratic fields with $\lambda_p = 1$ (Proposition \ref{prop1}).

\section{A Few Examples}

Before we get to the proof of Theorem \ref{thrm1}, let us first check some computations.  Given a prime $p>3$, we will denote $q = p^{p-1}$.  In the following example, each field $K$ has class number $1$, so the corresponding elliptic curves with complex multiplication by $\mO_K$ are defined over $\Q$.  Therefore, we may reduce each curve by the rational prime $p$.
\vskip5mm
\begin{tabular}{m{15em} m{15em} m{15em}}
$K = \Q(\sqrt{-3})$  & $K = \Q(\sqrt{-11})$ & $K = \Q(\sqrt{-19})$
 \\
$E: y^2 = x^3 -1$ & $E: y^2 = x^3 -264x -1694$ & $E: y^2 = x^3 -608x -5776$ \\
\begin{tabular}{ |m{5em}|m{5em}|} 
 \hline
$3< p < 70$ such that $p$ splits in $K$ & $\#\tilde{E}(\F_q)$  (mod $p^2$) \\ 
 \hline
 7 & 42  \\ 
 \hline
 13 & 0  \\ 
 \hline
 19 & 342  \\ 
 \hline
 31 & 527 \\ 
 \hline
 37 & 1332 \\
\hline
 43 & 559  \\ 
 \hline
 61 & 3660  \\ 
\hline
 67 & 3685  \\ 
 \hline
\end{tabular}
&
\begin{tabular}{ |m{5em}|m{5em}|} 
 \hline
$3< p < 70$ such that $p$ splits in $K$ & $\#\tilde{E}(\F_q)$  (mod $p^2$) \\ 
 \hline
 5 & 0  \\ 
 \hline
 23 & 230  \\ 
 \hline
 31 & 527  \\ 
 \hline
 47 & 1222  \\ 
\hline
 53 & 1007  \\ 
 \hline
 59 & 59  \\ 
\hline
 67 & 1340  \\ 
 \hline
\end{tabular}
&
\begin{tabular}{ |m{5em}|m{5em}|} 
 \hline
$3< p < 70$ such that $p$ splits in $K$ & $\#\tilde{E}(\F_q)$  (mod $p^2$) \\ 
 \hline
 5 & 20  \\ 
 \hline
 7 & 28  \\ 
 \hline
 11 & 0  \\ 
 \hline
 17 & 102  \\ 
\hline
 23 & 506  \\ 
 \hline
 43 & 1806  \\ 
\hline
 47 & 893  \\ 
 \hline
 61 & 3355 \\
 \hline
\end{tabular}
\end{tabular}
\vskip5mm
Consulting Table 1 in \cite{dfks}, we see for the primes $p < 70$, and the fields $K = \Q(\sqrt{-3}), \Q(\sqrt{-11})$, and $\Q(\sqrt{-19})$, that $\lambda_p(K) > 1$ precisely when the number of reduced points of the respective curves is zero modulo $p^2$.

\section{Some Preliminaries}
The main idea for the proof of Theorem \ref{thrm1} is that $\lambda_p(K) > 1$ if and only if $\frak{p}$ splits in the $p$-ray class field of $K$ modulo $\bar{\frak{p}}^2$, that is, the fixed field of the Sylow-$p$ subgroup of the ray class group of $K$ modulo $\bar{\frak{p}}^2$ (see the proof of Theorem 4 in \cite{gold}).  Via the theory of complex multiplication, this $p$-ray class field of $K$ is essentially obtained by adding coordinates of certain torsion points of an elliptic curve $E$ with complex multiplication by $\mO_K$.  In this section, we will first review some facts about the theory of complex multiplication.  Then we will go over some preliminary results needed for the proof of Theorem \ref{thrm1}.  For more on the theory of complex multiplication, see Chapter II in \cite{silverman1}.

\subsection{Complex Multiplication and Abelian Extensions of Imaginary Quadratic Fields}
For any elliptic curve $E$ and $m \in \Z$, we let $E[m] = \{P \in E(\bar{K}) \,:\, [m]P = O\}$ denote the $m$-torsion points of $E$ (as usual, $\bar{K}$ is the algebraic closure of $K$).  From the theory of complex multiplication, we can obtain Abelian extensions of $K$ by essentially adjoining the torsion points of $E$.  More precisely, if $F/K$ is an Abelian extension, then there exists some $m \in \Z^+$ such that
\[
F \subseteq K(j(E),E[m]) = K(j(E), \{ x,y \in \bar{K} \, : \, (x,y) \in E[m]\})
\]
where $j(E)$ is the $j$-invariant of $E$.  Note that this is analogous to the Kronecker-Weber Theorem:
\begin{theorem}[Kronecker-Weber]
Suppose that $F$ is an Abelian extension of $\Q$.  Then there exists $n \in \Z^+$ such that $F \subseteq \Q(\zeta_n)$, where $\zeta_n$ is a primitive $n$-th root of unity.
\end{theorem}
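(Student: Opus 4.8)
The plan is to reduce the global statement to its local analogue and then establish the local statement by a case analysis on ramification. The key input is the \emph{local Kronecker--Weber theorem}: every finite abelian extension of $\Q_p$ is contained in $\Q_p(\zeta_n)$ for some $n$. Granting this, I would globalize as follows. Let $F/\Q$ be abelian with group $G = \Gal(F/\Q)$. For each prime $p$ ramifying in $F$, the completion $F_\p/\Q_p$ at a prime $\p$ above $p$ is abelian, so by the local theorem it lies in some $\Q_p(\zeta_{p^{e_p} m_p})$ with $m_p$ prime to $p$; since $\Q_p(\zeta_{m_p})/\Q_p$ is unramified, the ramification at $p$ is caused entirely by the $p$-power cyclotomic part $\Q_p(\zeta_{p^{e_p}})$. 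Setting $n = \prod_p p^{e_p}$ over the ramified primes, I would form $L = F(\zeta_n)$ and show $F \subseteq \Q(\zeta_n)$, equivalently $\Gal(L/\Q(\zeta_n)) = 1$.

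The engine of the globalization is the interplay between inertia groups and Minkowski's discriminant bound. Since $L/\Q$ is ramified only at primes dividing $n$, the subgroup of $\Gal(L/\Q)$ generated by all inertia subgroups has fixed field unramified at every finite prime, hence of discriminant $\pm 1$, hence equal to $\Q$ by Minkowski; thus the inertia groups generate $\Gal(L/\Q)$. At each $\ell \mid n$ the local description above shows the inertia group $I_\ell(L)$ maps isomorphically under restriction onto the inertia group of $\Q(\zeta_n)/\Q$ at $\ell$, namely $(\Z/\ell^{e_\ell}\Z)^\times$. Because $\Gal(\Q(\zeta_n)/\Q) \cong \prod_{\ell \mid n} (\Z/\ell^{e_\ell}\Z)^\times$ is the internal direct product of these cyclotomic inertia groups, a short diagram chase then forces the restriction map $\Gal(L/\Q) \to \Gal(\Q(\zeta_n)/\Q)$ to be injective, giving $L = \Q(\zeta_n)$ and $F \subseteq \Q(\zeta_n)$.

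It remains to prove the local theorem, which I would reduce to cyclic extensions of prime-power degree $\ell^k$ and split into two cases according to whether $\ell = p$. If $\ell \neq p$, the ramification index is prime to $p$, so the extension is tamely ramified: its maximal unramified subextension lies in $\Q_p^{\unr} = \bigcup_{(m,p)=1}\Q_p(\zeta_m)$, and its totally ramified part, being abelian and tame, has degree dividing $p-1$ and therefore lies in $\Q_p(\zeta_p)$; hence the whole extension sits inside the cyclotomic field $\Q_p(\zeta_{mp})$. The remaining case $\ell = p$ is the wild one. Here I would invoke Lubin--Tate theory for the uniformizer $p$, which identifies the totally ramified abelian extensions of $\Q_p$ with the tower $\Q_p(\zeta_{p^\infty})$ and yields $\Q_p^{\mathrm{ab}} = \Q_p^{\unr}\cdot\Q_p(\zeta_{p^\infty})$ at once; alternatively one argues elementarily by adjoining $\zeta_p$, using Kummer theory to parametrize the cyclic degree-$p$ extensions of $\Q_p(\zeta_p)$, and checking that no more of them occur than are already visible inside $\Q_p(\zeta_{p^\infty})$.

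The main obstacle is exactly this wild case $\ell = p$. In contrast to the tame setting, $p$-power-degree cyclic extensions of $\Q_p$ are not governed by root-of-unity considerations alone, and pinning them down requires either the formal-group machinery of Lubin--Tate or a delicate Kummer-theoretic count; this is where the genuine content of Kronecker--Weber resides. The globalization and the $\ell \neq p$ case, by contrast, are essentially bookkeeping resting on Minkowski's discriminant bound and elementary ramification theory.
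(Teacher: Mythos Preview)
Your sketch is a standard and essentially correct outline of the modern proof of Kronecker--Weber via the local theorem and Minkowski's bound; the only place one must be a bit more careful than you indicate is the claim that each inertia group $I_\ell(L)$ maps \emph{isomorphically} onto $(\Z/\ell^{e_\ell}\Z)^\times$---one really needs that $\#I_\ell(L) \leq \#(\Z/\ell^{e_\ell}\Z)^\times$, which follows from the choice of $e_\ell$ and the local analysis, and then a cardinality comparison $[L:\Q] \leq \prod_\ell \#I_\ell(L) \leq \prod_\ell \#(\Z/\ell^{e_\ell}\Z)^\times = [\Q(\zeta_n):\Q] \leq [L:\Q]$ forces equality throughout. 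With that refinement the argument goes through.

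However, there is nothing to compare to here: the paper does not prove the Kronecker--Weber theorem at all. It is merely quoted, without proof, as a classical analogue to motivate the complex-multiplication description of abelian extensions of an imaginary quadratic field. So your proposal is not a reconstruction of the paper's argument but rather an independent proof of a result the paper simply cites.
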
   
Something interesting happens when we adjoin the $x$-coordinates of the torsion points of $E$.  Consider the Webber function $\Phi: E[m] \to \bar{K}$ given by
\begin{align*}
\Phi(x,y) = 
\begin{cases}
x^2  & \text{ if $K = \Q(i)$}
\\
x^3  & \text{ if $K = \Q(\sqrt{-3})$}
\\
x  & \text{else}
\end{cases}
\end{align*}
Since $E$ has complex multiplication by $\mO_K$, we may also define for a fractional ideal $\mathfrak{c}$ of $K$, the $\frak{c}$-torsion points of $E$,
\[
E[\frak{c}] = \{ P \in E(\bar{K}) \,:\, [\alpha] P = O \text{ for all } \alpha \in \frak{c}\},
\]
and the Abelian extension $K(j(E),\Phi(E[\frak{c}]))/K$, where
\[
K(j(E), \Phi(E[\frak{c}])) = K(j(E),\{ \Phi(P) \, : \, P \in E[\frak{c}]\}).
\]
Then we have that $K(j(E), \Phi(E[\frak{c}]))$ is the ray class field of $K$ modulo $\frak{c}$.

\subsection{Preliminary Results}

Let $K$ be an imaginary quadratic field.  Throughout we will assume that $E/K(j(E))$ is an elliptic curve with complex multiplication by $\mO_K$, and that $p\mO_K = \frak{p}\bar{\frak{p}}$.  Denote $H$ to be the $p$-ray class field of $K$, and $\widetilde{K} = K(j(E), E[\bar{\frak{p}}^2])$.  Let be a prime of $\widetilde{K}$ above $\frak{p}$.  As mentioned before, we can relate the $\lambda$-invariant of $K$ and certain $p$-power torsion of a reduced elliptic curve with complex multiplication by $\mO_K$ via the theory of complex multiplication.  This is done by restating a result of Gold (\cite{gold}, Theorem 4):

\begin{proposition}[Gold]\label{prop1}
Let $H$ be the $p$-ray class field modulo $\bar{\frak{p}}^2$, and $\sigma_{\frak{p}} \in \Gal(H/K)$ the image of $\frak{p}$ under the Artin map.  Then $\lambda_{p}(K) > 1$ if and only if $\sigma_{\frak{p}} = 1$.
\end{proposition}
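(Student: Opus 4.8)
The plan is to unwind the definition of Iwasawa's $\lambda$-invariant for the cyclotomic $\Z_p$-extension $K_\infty/K$ in terms of the class group module $X = \varprojlim A_n$ and translate the condition $\lambda_p(K) > 1$ into a statement about the splitting of $\frak{p}$ in a ray class field. Since $p$ splits in $K$ as $p\mO_K = \frak{p}\bar{\frak{p}}$ and $p \nmid h_K$, the cyclotomic $\Z_p$-extension $K_\infty$ is the one in which exactly one of $\frak{p},\bar{\frak{p}}$ is ramified — the standard setup is that $K_\infty$ lies inside the compositum of the two $\Z_p$-extensions ramified only at $\frak{p}$ and only at $\bar{\frak{p}}$ respectively, and one checks (as Gold does) that the cyclotomic one is ``anticyclotomic-free'' in the appropriate sense. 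First I would recall that $\lambda_p(K) \ge 1$ always holds here (this is stated in the excerpt), so the content is the dichotomy $\lambda = 1$ versus $\lambda > 1$. The key is that $\lambda = 1$ if and only if the characteristic ideal of $X$ is generated by a distinguished polynomial of degree exactly $1$, equivalently $X/(\text{augmentation})$-type quotients stabilize in the right way.

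The core step is Gold's argument itself (\cite{gold}, Theorem 4), which I would reproduce in the form needed: using the main conjecture--free, purely algebraic analysis of the Iwasawa module for an imaginary quadratic field, one shows that $\lambda_p(K) > 1$ is equivalent to a congruence condition on the Frobenius at $\frak{p}$ inside a specific abelian extension of $K$ of conductor dividing $\bar{\frak{p}}^2$. Concretely, the first layer $K_1$ of $K_\infty$ and the genus-theory/ambiguous-class computation force $\lambda > 1$ exactly when a certain norm residue symbol $\left(\frac{\alpha, K_1/K}{\bar{\frak{p}}}\right)$ is trivial, where $\frak{p}^{h_K} = (\alpha)$; and this norm residue symbol triviality is precisely the statement that $\frak{p}$ splits completely in the fixed field $H'$ of the Sylow-$p$ subgroup of the ray class group $C_{\bar{\frak{p}}^2}$ of $K$ modulo $\bar{\frak{p}}^2$. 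Equivalently, writing $\sigma_{\frak{p}} \in \Gal(H/K)$ for the Artin symbol of $\frak{p}$ in $H := $ this $p$-ray class field, the condition is $\sigma_{\frak{p}} = 1$. I would lay this out as: (i) identify $H$ with the maximal $p$-subextension of the ray class field mod $\bar{\frak{p}}^2$; (ii) invoke class field theory to see $\Gal(H/K)$ is the Sylow-$p$ part of $C_{\bar{\frak{p}}^2} \cong C_K \times (\mO_K/\bar{\frak{p}}^2)^\times / (\text{units})$, which since $p \nmid h_K$ and $(\mO_K/\bar{\frak{p}}^2)^\times$ has order $p(p-1)$ gives $\Gal(H/K) \cong \Z/p\Z$; (iii) quote Gold's equivalence of $\lambda_p(K) > 1$ with the vanishing of the relevant Artin symbol.

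The main obstacle is faithfully extracting the equivalence from Gold's proof rather than from his stated criterion (Theorem \ref{gold}), since Theorem \ref{gold} is already the ``downstream'' arithmetic reformulation ($\alpha^{p-1} \equiv 1 \bmod \bar{\frak{p}}^2$) and what is needed here is the intermediate class-field-theoretic statement. I would need to check carefully that ``$\sigma_{\frak{p}} = 1$ in $\Gal(H/K)$'' is genuinely equivalent to ``$\alpha^{p-1} \equiv 1 \bmod \bar{\frak{p}}^2$'': under the Artin isomorphism $\Gal(H/K) \cong (\mO_K/\bar{\frak{p}}^2)^\times / (\text{image of global units and } h_K\text{-th powers})$, the symbol $\sigma_{\frak{p}}$ corresponds to the class of $\alpha$ (a generator of $\frak{p}^{h_K}$), and triviality in the order-$p$ quotient is exactly $\alpha \equiv (\text{unit}) \cdot (p\text{-divisible part}) \bmod \bar{\frak{p}}^2$, which unwinds to $\alpha^{p-1} \equiv 1 \bmod \bar{\frak{p}}^2$ because the unit part and the $h_K$-th power ambiguity are killed by raising to the $(p-1)$ power modulo $\bar{\frak{p}}^2$. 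Assembling these identifications — and being careful about the Weber-function normalization twist when $K = \Q(i)$ or $\Q(\sqrt{-3})$, which only affects $H$ versus $\widetilde{K}$ and not the statement of this proposition — completes the proof.
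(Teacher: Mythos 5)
Your proposal is correct and takes essentially the same route as the paper: both defer the substantive equivalence ($\lambda_p(K)>1$ if and only if $\frak{p}$ splits in $H/K$) to the class-field-theoretic statement extracted from the proof of Gold's Theorem 4, and then conclude from $[H:K]=p$ that splitting, complete splitting, and triviality of the Artin symbol $\sigma_{\frak{p}}$ all coincide for the unramified prime $\frak{p}$. The additional material you include on recovering the congruence $\alpha^{p-1}\equiv 1 \Mod{\bar{\frak{p}}^2}$ is the content of Theorem \ref{gold} rather than of this proposition and is not needed here.
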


\begin{proof}
We have that $\lambda_p(K) > 1 $ if and only if $\frak{p}$ splits in $H/K$ (see \cite{gold}, proof of Theorem 4, and Theorem 3 in \cite{gold}).  Since $[H:K] = p$, the prime $\frak{p}$ splits in $H/K$ if and only if it splits completely in $H/K$.  The Proposition now follows.
\end{proof}

We now seek to understand how certain $p$-power torsion points of $E$ reduce modulo $\frak{P}$.  To that end, we have the following:

\begin{theorem}[Deuring's reduction criterion \cite{deuring}]\label{deuring}
Let $F/\Q$ be a finite extension, $E/F$ an elliptic curve with complex multiplication by $\mO_K$, and $\mathfrak{P}$ a prime of $F$ above $p$ for which $E$ has good reduction.  Then $E$ has ordinary reduction at $\mathfrak{P}$ if and only if $p$ splits in $K$.
\end{theorem}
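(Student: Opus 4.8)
The plan is to study the endomorphism ring of the reduction $\tilde{E}$ and to feed it into the structure theory of endomorphism algebras of elliptic curves over finite fields. After replacing $F$ by a finite extension and $\mathfrak{P}$ by a prime above it — which changes neither the reduction type at $\mathfrak{P}$ nor the splitting behaviour of $p$ in $K$ — I may assume $\End_F(E) = \mO_K$. Since $E$ has good reduction at $\mathfrak{P}$, the reduction homomorphism $\End(E) \to \End(\tilde{E})$ is an injective ring homomorphism (see \cite{silverman1}), so $\mO_K$ embeds into $\End(\tilde{E})$, and hence, after base change to $\overline{\F}_p$, into $\End(\tilde{E}_{\overline{\F}_p})$. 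Recall that $\tilde{E}$ is either ordinary or supersingular, and that $\End(\tilde{E}_{\overline{\F}_p})$ is an order in an imaginary quadratic field in the former case, while $\End(\tilde{E}_{\overline{\F}_p}) \otimes_{\Z} \Q$ is the rational quaternion algebra $B$ ramified exactly at $p$ and $\infty$ in the latter.

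First I would show that if $p$ splits in $K$ then $\tilde{E}$ is ordinary. Arguing by contradiction, if $\tilde{E}$ were supersingular, then the embedding $\mO_K \hookrightarrow \End(\tilde{E}_{\overline{\F}_p})$ would give an embedding of $\Q$-algebras $K \hookrightarrow B$. But an imaginary quadratic field $K$ embeds into $B$ if and only if $p$ is non-split in $K$: a quadratic field embeds into a quaternion algebra over $\Q$ exactly when it is a field at each ramified place of the algebra, and for $B = B_{p,\infty}$ this is the imaginary condition at $\infty$ together with the non-split condition at $p$. Since $p$ splits in $K$ this is impossible, so $\tilde{E}$ is ordinary.

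Next I would prove the converse: if $\tilde{E}$ is ordinary then $p$ splits in $K$. In this case $\End(\tilde{E}_{\overline{\F}_p})$ is an order in an imaginary quadratic field containing $\mO_K$; comparing degrees over $\Q$ forces that field to be $K$, and maximality of $\mO_K$ then forces $\End(\tilde{E}_{\overline{\F}_p}) = \mO_K$. Let $q_0 = \#(\mO_F/\mathfrak{P}) = p^f$ and let $\pi \in \End(\tilde{E}) \subseteq \mO_K$ be the $q_0$-power Frobenius, so that $\pi\bar{\pi} = q_0$ and $\pi + \bar{\pi} = a$ with $\#\tilde{E}(\mO_F/\mathfrak{P}) = q_0 + 1 - a$; ordinariness of $\tilde{E}$ is equivalent to $p \nmid a$. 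If $p$ did not split in $K$, then the unique prime $\mathfrak{q}$ of $\mO_K$ above $p$ would satisfy $\bar{\mathfrak{q}} = \mathfrak{q}$, so from $(\pi)\,\overline{(\pi)} = (p)^f$ and $v_{\mathfrak{q}}(\pi) = v_{\bar{\mathfrak{q}}}(\bar{\pi}) = v_{\mathfrak{q}}(\bar{\pi})$ we would get $2\,v_{\mathfrak{q}}(\pi) = f\,v_{\mathfrak{q}}(p) \geq 1$, hence $\mathfrak{q} \mid (\pi)$ and likewise $\mathfrak{q} \mid (\bar{\pi})$, so $\mathfrak{q} \mid (a)$ and therefore $p \mid a$, a contradiction. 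Hence $p$ splits. (Alternatively one can use the action of $\mO_K$ on $\mathrm{Lie}\,\tilde{E}$: its kernel is a prime $\mathfrak{q} \mid p$ of $\mO_K$, Frobenius lies in $\mathfrak{q}$ because it is purely inseparable, the Verschiebung $=\bar{\pi}$ lies outside $\mathfrak{q}$ because $\tilde{E}$ is ordinary, and this forces $\mathfrak{q} \neq \bar{\mathfrak{q}}$.)

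The main obstacle is the one genuinely non-elementary input: the fact that an imaginary quadratic field embeds into $B_{p,\infty}$ precisely when $p$ is non-split, which is a local–global statement for quaternion algebras; I would quote it from a standard reference (e.g.\ Vign\'eras' book on quaternion algebras). The remaining ingredients — injectivity of reduction on endomorphism rings, the ordinary/supersingular dichotomy together with the description of the endomorphism algebra over $\overline{\F}_p$ in each case, and the characterization of ordinariness by $p \nmid a$ — are standard and can be cited from the literature on complex multiplication.
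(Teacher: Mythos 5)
The paper does not prove this statement: it is quoted as a known theorem of Deuring, with only a citation to \cite{deuring} and no argument, so there is no in-paper proof to compare against. Your proposal is a correct and essentially standard proof. The forward direction (split $\Rightarrow$ ordinary) via the injectivity of reduction on endomorphism rings, the dichotomy for $\End(\tilde{E}_{\overline{\F}_p})$, and the local--global embedding criterion for $K\hookrightarrow B_{p,\infty}$ is exactly the classical argument; the quaternionic embedding fact is the one genuinely nontrivial input and is correctly identified and correctly stated (an imaginary quadratic field embeds in $B_{p,\infty}$ iff $p$ does not split). The converse is also sound: in the ordinary case the reduction map identifies $\End(\tilde{E}_{\overline{\F}_p})$ with $\mO_K$ since an order containing a maximal order is that maximal order, and your valuation computation $2\,v_{\mathfrak{q}}(\pi)=f\,v_{\mathfrak{q}}(p)$ when $\bar{\mathfrak{q}}=\mathfrak{q}$ correctly forces $p\mid a$, contradicting the standard characterization of ordinariness by $p\nmid a$; the alternative argument via the action on $\mathrm{Lie}\,\tilde{E}$ is equally valid. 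The initial reductions (passing to a finite extension of $F$ to realize all endomorphisms, and the invariance of the ordinary/supersingular type under base change) are harmless. In short, your write-up would serve as a legitimate self-contained proof of the theorem that the paper merely cites.
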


\begin{lemma}\label{lem1}
Let $E/K(j(E))$ be an elliptic curve with complex multiplication by $\mO_K$, and let $r \in \Z^+$.  If $\frak{P}$ is a prime of $\widetilde{K}$ above $\frak{p}$ such that $E$ has good reduction at $\frak{P}$, then,
\begin{enumerate}
\item[a.] $E[p^r] \cong E[\frak{p}^r] \oplus E[\bar{\frak{p}}^r]$.
\item[b.] The reduction modulo $\frak{P}$ map $E[\bar{\frak{p}}^r] \to \tilde{E}[p^r]$ is an isomorphism.
\end{enumerate}
\end{lemma}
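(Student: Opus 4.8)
The plan is to exploit the action of $\mO_K$ on $E[p^r]$ coming from the complex multiplication. Since $p\mO_K = \frak{p}\bar{\frak{p}}$ with $\frak{p}$ and $\bar{\frak{p}}$ coprime, the Chinese Remainder Theorem gives $\mO_K/p^r\mO_K \cong \mO_K/\frak{p}^r \times \mO_K/\bar{\frak{p}}^r$. First I would observe that $E[p^r]$ is a free $\mO_K/p^r\mO_K$-module of rank $1$: indeed $E[p^r] \cong (\mO_K/p^r\mO_K)$ as $\mO_K$-modules, because over $\C$ (or over $\bar K$) one has $E \cong \C/\mathfrak{a}$ for a fractional ideal $\mathfrak{a}$, and the $\mO_K$-module structure on $\frac{1}{p^r}\mathfrak{a}/\mathfrak{a}$ is identified with $\mO_K/p^r\mO_K$ after localizing (or after choosing $\mathfrak{a}$ coprime to $p$). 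Applying the idempotent decomposition of $\mO_K/p^r\mO_K$ then splits $E[p^r]$ as the direct sum of its $\frak{p}^r$-part and $\bar{\frak{p}}^r$-part, which are precisely $E[\frak{p}^r]$ and $E[\bar{\frak{p}}^r]$ (the kernel of $\frak{p}^r$, resp.\ $\bar{\frak{p}}^r$, inside $E[p^r]$ — note $E[\frak{p}^r]\subseteq E[p^r]$ since $p^r \in \frak{p}^r$). This gives part (a), and incidentally shows each summand is cyclic of order $p^r$.

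For part (b), I would argue via the formal group and Deuring's criterion (Theorem \ref{deuring}). By Deuring, $E$ has ordinary good reduction at $\frak{P}$, so the reduction map on $p$-power torsion has kernel exactly the $\frak{P}$-adic Tate module direction corresponding to the formal group, which has height $1$; concretely, the kernel of reduction $E[p^r] \to \tilde E[p^r]$ is the set of points reducing to $O$, i.e.\ the $p^r$-torsion in the formal group $\hat E$ over the completion. The key point is to identify this kernel with $E[\frak{p}^r]$ (so that the complementary summand $E[\bar{\frak{p}}^r]$ maps isomorphically onto $\tilde E[p^r] \cong \mathbb{Z}/p^r$). This identification comes from the fact that on $\hat E$ the endomorphism $[\pi]$ attached to a uniformizer $\pi$ of $\frak{p}$ (equivalently, the element of $\mO_K$ generating $\frak{p}$-locally, which reduces to a unit times Frobenius-type map) acts invertibly on the formal group mod $\frak{P}$ issues — more cleanly: $\bar{\frak{p}}$ is the kernel of the reduction $\mO_K \to \tilde{\mO} = \mathbb{F}_p$ composed appropriately, so multiplication by an element of $\bar{\frak{p}}$ kills the étale quotient, forcing $E[\bar{\frak{p}}^r]$ to inject into the étale part $\tilde E[p^r]$. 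Counting orders ($\#E[\bar{\frak{p}}^r] = p^r = \#\tilde E[p^r]$, the latter since reduction is ordinary) upgrades the injection to an isomorphism.

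The main obstacle I anticipate is pinning down exactly which of the two primes $\frak{p},\bar{\frak{p}}$ corresponds to the formal group (kernel of reduction) and which to the étale quotient — this is a normalization/orientation issue depending on how $\mO_K \hookrightarrow \End(E)$ is chosen relative to the embedding of $K(j(E))$ determined by $\frak{P}$. One must check the conventions are consistent with the statement (with $\frak{P}$ above $\frak{p}$, it is $E[\bar{\frak{p}}^r]$ that reduces isomorphically). I would resolve this by working with the formal group directly: the reduction-of-torsion kernel is $\hat E(\mathfrak{m})[p^r]$, and since $\hat E$ over the ring of integers of the completion at $\frak{P}$ is a Lubin–Tate-type formal $\mO_{K,\frak{p}}$-module of height $1$, its $p^r$-torsion is exactly $\hat E[\frak{p}^r] = E[\frak{p}^r]$. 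Everything else is a routine order count and an application of Theorem \ref{deuring} to guarantee the reduction is ordinary (so the étale quotient has the full $p^r$ points). Alternatively, one can cite the standard description of the Galois action on the Tate module of a CM elliptic curve at a split ordinary prime (e.g.\ Chapter II of \cite{silverman1}), but I would prefer the self-contained formal-group argument.
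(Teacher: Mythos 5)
Your proposal is correct, and for part (b) it takes a genuinely different route from the paper. For part (a) the two arguments are essentially the same: the paper shows directly that coprime ideals $I,J$ give $E[IJ]=E[I]\oplus E[J]$ (writing $1=a+b$ with $a\in I$, $b\in J$ and counting orders), while you repackage this as the CRT idempotent decomposition of the free rank-one $\mO_K/p^r\mO_K$-module $E[p^r]$; same content, slightly different bookkeeping. For part (b) the paper argues globally: it takes $\alpha$ with $\frak{p}^{g_{\frak{p}}}=(\alpha)$, invokes the theorem that the reduction of $[\alpha]$ is the $p^{g_{\frak{p}}}$-power Frobenius (Theorem 5.3 and Corollary 5.4 of Chapter II of \cite{silverman1}), concludes that $E[\frak{p}^{g_{\frak{p}}}]$ dies under reduction while Deuring's criterion forces $\tilde{E}[p^{g_{\frak{p}}}]\cong\Z/p^{g_{\frak{p}}}\Z$, and then descends to general $r$ through a restriction-of-primes diagram. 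You instead argue locally at $\frak{P}$: the kernel of reduction on $p$-power torsion is the formal group, which by ordinarity is a height-one formal $\mO_{K,\frak{p}}$-module, so its $p^r$-torsion is exactly $E[\frak{p}^r]$, and the complementary summand $E[\bar{\frak{p}}^r]$ injects into $\tilde{E}[p^r]$ and is then an isomorphism by counting. Your approach handles arbitrary $r$ in one stroke (no detour through the order $g_{\frak{p}}$ of $\frak{p}$ in the class group) and makes the $\frak{p}$-versus-$\bar{\frak{p}}$ orientation transparent via the derivative of the CM action on the formal group; the paper's approach avoids formal groups entirely and reuses exactly the Frobenius diagram that reappears in the proof of the main theorem, which is why it is the natural choice there. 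One caveat: your intermediate sentence that ``multiplication by an element of $\bar{\frak{p}}$ kills the \'etale quotient'' is misstated (a single element of $\bar{\frak{p}}\setminus\bar{\frak{p}}^2$ does not kill $\tilde{E}[p^r]$ for $r\geq 2$; the correct statement is that elements of $\bar{\frak{p}}\setminus\frak{p}$ act invertibly on the formal part, or that $\bar{\frak{p}}^r$ annihilates the \'etale quotient), but the Lubin--Tate resolution you give afterwards supplies the correct and complete argument, so this does not affect the validity of the proof.
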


\begin{proof}
Proof of (a):  This will follow from the more general fact if $I$ and $J$ are coprime ideals of $\mO_K$, then $E[IJ] = E[I] \oplus E[J]$.  Indeed, since $I$ and $J$ are coprime, we can find $a \in I$ and $b \in J$ such that $a + b = 1$, and for any $P \in E[I] \cap E[J]$, we have $P = [a + b]P = [a]P + [b]P = O$.  Now, clearly $E[I]$ and $E[J]$ are normal subgroups of $E[IJ]$, and both $E[IJ]$ and $E[I] \oplus E[J]$ have order $N_{K/\Q}(I) \cdot N_{K/\Q}(J)$.  Since $\frak{p}$ and $\bar{\frak{p}}$ are coprime, this completes the proof of part (a).

Proof of (b):  Before the proof, recall that if $L/F$ is Galois with $\frak{Q}$ a prime of $L$ above a prime $\frak{q}$ of $F$, then we have the decomposition subgroup with respect to $\frak{Q}/\frak{q}$
\[
Z(\frak{Q}/\frak{q}) = \{ \sigma \in \Gal(L/F) \, : \, \sigma\frak{Q} = \frak{Q} \}
\]
and the inertia group with respect to $\frak{Q}/\frak{q}$
\[
T( \frak{Q}/ \frak{q}) = \{ \sigma \in Z(\frak{Q}/\frak{q}) \, : \, \sigma(x) \equiv x \Mod{\frak{Q}} \text{ for all $x \in L$}\}.
\]
If $G(\frak{Q}/\frak{q}) = \Gal(\mO_L/\frak{Q}/\mO_F/\frak{q})$, then we have a map $Z(\frak{Q}/\frak{q}) \to G(\frak{Q}/\frak{q})$, given by
\[
\sigma \mapsto \bar{\sigma}: x + \frak{Q} \mapsto \sigma(x) + \frak{Q}.
\]
Now, back to the proof.  Let $g_{\frak{p}}$ be the order of the class of $\frak{p}$ in $Cl(K)$, and consider $\widetilde{L} = K(j(E), E[p^{g_{\p}}])$, and $L = K(j(E))$.  The field $\widetilde{K} \subseteq \widetilde{L}$ will be as it was defined above.  Let $\mathcal{P}$ be a prime of $L$ above $\frak{p}$, and $\mathscr{P}$ be a prime of $\widetilde{L}$ above $\mathcal{P}$.  Let $\tau_{\frak{p}} \in \Gal(L/K)$ be the image of $\frak{p}$ under the Artin map, and $\tau \in \Gal(\widetilde{L}/K)$ such that $\tau$ maps to the Frobenius automorphism in $G(\mathscr{P}/\frak{p})$.  Thus, $\tau|_L = \tau_{\frak{p}}$, and $E^{\tau^{g_{\p}}} = F(\tau^{g_{\p}})*E = \frak{p}^{g_{\p}}*E$.  Then $\tau^{g_{\p}}$ induces an isogeny $\eta : E \to E$ that gives rise to the commutative diagram
\[ 
\begin{tikzcd}
E \arrow{r}{[\alpha]} \arrow[swap]{d}{} & E \arrow{d}{} \\%
\tilde{E} \arrow{r}{\text{$\phi$}}& \tilde{E}
\end{tikzcd}
\]
where $\tilde{E}$ is the reduction of $E(\widetilde{L})$ modulo $\mathscr{P}$, the map $\phi$ is the $p^{g_{\p}}$-th power Frobenius, and $\frak{p}^{g_{\p}} = (\alpha)$ (see Theorem 5.3 and Corollary 5.4 in \cite{silverman1}).  Since $E[\frak{p}^{g_{\p}}] = \ker [\alpha]$, we have that $\tilde{E}[\frak{p}^{g_{\p}}] \subseteq \ker \phi$, so that $\tilde{E}[\frak{p}^{g_{\p}}]$ is trivial.  But by Deuring's reduction criterion \ref{deuring}, we have that $\tilde{E}[p^{g_{\p}}] \cong \Z/p^{g_{\p}}\Z$.  So it must be that $\tilde{E}[p^{g_{\p}}] = \tilde{E}[\bar{\frak{p}}^{g_{\p}}]$.  Now, for a positive integer $r < g_{\p}$, define $\widetilde{K}_r = K(j(E), E[\bar{\frak{p}}^r]) \subseteq \widetilde{L}$, and let $\frak{P}_r = \mathscr{P} \cap \widetilde{K}_r$.  Notice we can view any $P  \in E(\widetilde{K}_r)$ an element of $E(\widetilde{L})$.  Therefore, the reduction of $P$ modulo $\mathscr{P}$ is the same as reduction modulo $\frak{P}_r$.  We now have the commutative diagram
\[ 
\begin{tikzcd}
E(\widetilde{K}_r) \arrow{r}{} \arrow[swap]{d}{\text{$\Mod{\frak{P}_r}$}} & E(\widetilde{L}) \arrow{d}{\text{$\Mod{\mathscr{P}}$}} \\%
\tilde{E}(\mO_{\widetilde{K}_r}/\frak{P}_r) \arrow{r}{}& \tilde{E}(\mO_{\widetilde{L}}/\mathscr{P})
\end{tikzcd}
\]
where the horizontal maps are inclusion, and the vertical maps are reduction modulo $\frak{P}_r$ and $\mathscr{P}$ respectively.  From this diagram, we see that $E[\bar{\p}^r] \subseteq E[\bar{\p}^{g_{\p}}]$ is non-trivial modulo $\mathscr{P}$, so $E[\bar{\p}^r]$ is non-trivial modulo $\frak{P}_r$.
\end{proof}


\section{Proof of Theorem \ref{thrm1}}\label{proof}

\begin{proof}[Proof of Theorem \ref{thrm1}]
As before, let $H$ be the $p$-ray class group of $K$ modulo $\frak{p}^2$, denote $\widetilde{K} = K(j(E),E[\bar{\frak{p}}^2])$, $\mathscr{P}$ a prime of $H$ above $\frak{p}$, and $\frak{P}$ a prime of $\widetilde{K}$ above $\mathscr{P}$.  We have the following commutative diagram,
\[
\begin{tikzcd}
1 \arrow{r}{} &  T(\frak{P}/\frak{p}) \arrow{r}{} \arrow[swap]{d}{} & Z(\frak{P}/\frak{p}) \arrow{r}{}  \arrow[swap]{d}{} & G(\frak{P}/\frak{p})  \arrow{r}{} \arrow[swap]{d}{} & 1  \\%
1 \arrow{r} & T(\mathscr{P}/\frak{p}) \arrow{r}{} & Z(\mathscr{P}/\frak{p}) \arrow{r}{} & G(\mathscr{P}/\frak{p})  \arrow{r}{} & 1
\end{tikzcd}
\]
where the vertical maps are restriction from $\widetilde{K}$ to $H$.  Let $\tau \in Z(\frak{P}/\p)$ be a lift of $\Frob(\frak{P}/\frak{p}) \in G(\frak{P}/\frak{p})$, and $\bar{\tau} \in \Gal(K(E[\bar{\p}^2])/K)$ the restriction of $\tau$ to $\widetilde{H} = K(E[\bar{\p}^2])$.  Then $\bar{\tau}|_H = \sigma_{\p}$.  From Lemma \ref{lem1} we have another commutative diagram
\[ 
\begin{tikzcd}
E[\bar{\frak{p}}^2] \arrow{r}{\bar{\tau}} \arrow[swap]{d}{\cong} & E[\bar{\frak{p}}^2]\arrow{d}{\cong} \\%
\tilde{E}[p^2] \arrow{r}{\text{$\phi$}}& \tilde{E}[p^2]
\end{tikzcd}
\]
where $\phi$ is the $p$-power Frobenius automorphism.  Let $\widetilde{\frak{P}} = \frak{P} \cap \mO_{\widetilde{H}}$.  Then the reduction of $E[\bar{\p}^2]$ modulo $\frak{P}$ is the same as reduction modulo $\widetilde{\frak{P}}$.  If $f = [\mO_{\widetilde{H}}/\widetilde{\frak{P}}:\mO_K/\frak{p}] > 1$, then for any positive integer $a < f$, we have that $\bar{\tau}^a$ will act non-trivially on $E[\bar{\frak{p}}^2]$.  

Assume that $\lambda_p(K) > 1$.  Then $\sigma_{\frak{p}} \in \Gal(H/K)$ is trivial  by Proposition \ref{prop1}, and from the proof of Theorem 2.3 in Chapter II of \cite{silverman1}, we have that $[\widetilde{H} : K]$ divides $p(p-1)$.  Thus, $f$ divides $p-1$ so that $\bar{\tau}^{p-1}$ acts trivially on $E[\bar{\frak{p}}^2]$.  From the commutative diagram, we then have that the $p^{p-1}$-power Frobenius fixes $\tilde{E}[p^2]$, which gives the forward implication.  

Now assume that $\sigma_{\p}$ is non-trivial in $\Gal(H/K)$ (i.e. $\lambda_p(K) \leq 1$).  Then, $\frak{p}$ is unramified and does not split in $H/K$.  Hence, the residue degree of $\frak{p}$ in $H/K$ is $p$, and $f \geq p > p-1$.  So, $\bar{\tau}^{p-1}$ does not act trivially on $E[\bar{\frak{p}}^2]$, hence the $p^{p-1}$-power Frobenius does not act trivially on $\tilde{E}[p^2]$.  This gives the reverse implication.  
\end{proof}

\section{Some Applications}\label{applications}

Suppose that $p$ is a prime and $m \in \Z^+$ such that $p \equiv 1 \Mod{m}$.  We say that $p$ is $1$-exceptional for $m$ if
\[
\left(\frac{p^2 -1}{m}\right)^{p-1}_p! = \prod_{\mathclap{a = 1 \atop \gcd(a,p) = 1}}^{\frac{p^2-1}{m}} a^{p-1} \equiv 1 \Mod{p^2}.
\]  
In \cite{cd1} and \cite{cd}, Cosgrave and Dilcher study the $1$-exceptional primes for $m = 3$ and $4$, and recently the author has shown that $p$ is $1$-exceptional for $m = 3$ if and only if $\lambda_p(\Q(\sqrt{-3})) > 1$, and $p$ is $1$-exceptional for $m = 4$ if and only if $\lambda_p(\Q(i)) > 1$ (see Theorem 1.1 in \cite{stokes}).  In the same paper, the author establishes a connection between the $1$-exceptional primes for $m = 3$ and $4$, and the Glaisher and Euler numbers respectively (see Corollaries 1.3 and 1.4 of \cite{stokes}).  Thus we have:

\begin{proposition}\label{thrm2}
Let $p \equiv 1 \Mod 3$, and consider $E/\F_p: y^2 = x^3 - 1$.  Then $p$ is $1$-exceptional for $m = 3$ if and only if $G_{p-1} \equiv 0 \Mod{p^2}$ if and only if  $\#E(\F_q) \equiv 0 \Mod{p^2}$, where $q = p^{p-1}$, and $\{G_n\}$ are the Glaisher numbers given by
\[
\frac{3/2}{e^x + e^{-x} + 1} = \sum_{n = 0}^{\infty} G_n \frac{x^n}{n!}.
\]
\end{proposition}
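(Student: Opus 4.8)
The plan is to obtain Proposition \ref{thrm2} as a bookkeeping combination of Theorem \ref{thrm1} with the two results of \cite{stokes} recalled just above it, once we have checked that the curve $y^2 = x^3 - 1$ satisfies the hypotheses of Theorem \ref{thrm1} for the field $K = \Q(\sqrt{-3})$. Concretely, the three asserted conditions will be linked in a chain: $\#E(\F_q)\equiv 0\Mod{p^2}$ $\Longleftrightarrow$ $\lambda_p(\Q(\sqrt{-3}))>1$ $\Longleftrightarrow$ $p$ is $1$-exceptional for $m=3$ $\Longleftrightarrow$ $G_{p-1}\equiv 0\Mod{p^2}$.

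First I would verify the setup for Theorem \ref{thrm1}. Since $p\equiv 1\Mod 3$ is exactly the condition $\left(\tfrac{-3}{p}\right)=1$, i.e.\ that $p$ splits in $K=\Q(\sqrt{-3})$ as $p\mO_K=\frak{p}\bar{\frak{p}}$; together with $p>3$ and $h_K=1$, this puts us in the situation of Theorem \ref{thrm1}. The curve $E:y^2=x^3-1$ has $j$-invariant $0$, hence $\End(E)\cong\mO_K$, and because $h_K=1$ the Hilbert class field $K(j(E))$ equals $K$ and $E$ is already defined over $\Q$. Its discriminant is $-432=-2^4 3^3$, so $E$ has good reduction at the prime $\frak{p}$ above $p$ (as $p>3$); moreover, since $p$ splits, the residue field $\mO_K/\frak{p}$ is $\F_p$, and reduction modulo $\frak{p}$ of the integral model $y^2=x^3-1$ agrees with reduction modulo $p$. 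Thus the curve $\tilde E/\F_p$ of Theorem \ref{thrm1} is precisely the curve $E/\F_p$ of the statement, and the field $\F_q$ with $q=p^{p-1}$ is the same in both places.

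Next I would string the equivalences together. Theorem \ref{thrm1} gives $\#E(\F_q)\equiv 0\Mod{p^2}$ if and only if $\lambda_p(\Q(\sqrt{-3}))>1$. Theorem 1.1 of \cite{stokes} gives $\lambda_p(\Q(\sqrt{-3}))>1$ if and only if $p$ is $1$-exceptional for $m=3$. Corollary 1.3 of \cite{stokes} gives $p$ is $1$-exceptional for $m=3$ if and only if $G_{p-1}\equiv 0\Mod{p^2}$. Concatenating these three biconditionals yields exactly the three-way equivalence claimed in Proposition \ref{thrm2}.

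Since the substantive work lives in Theorem \ref{thrm1} and in \cite{stokes}, there is no genuine obstacle here; the only point needing a little care is the identification of objects across the two sources, namely that the reduction modulo a prime $\mathfrak{P}$ of the Hilbert class field collapses to ordinary reduction modulo $p$ (because $h_K=1$ and $E$ descends to $\Q$), and that the finite field $\F_{p^{p-1}}$ of Theorem \ref{thrm1} is literally the $\F_q$ appearing in the Glaisher-number statement. I would also note that the identical argument, applied to $E:y^2=x^3+x$ with $K=\Q(i)$, $m=4$, and the Euler numbers in place of the Glaisher numbers, produces the companion statement (cf.\ Proposition \ref{thrm3}).
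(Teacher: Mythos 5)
Your proposal is correct and matches the paper's (implicit) argument: the paper states Proposition \ref{thrm2} immediately after recalling Theorem 1.1 and Corollary 1.3 of \cite{stokes} with the words ``Thus we have,'' i.e.\ it is obtained exactly by chaining those two equivalences with Theorem \ref{thrm1}. Your additional verification that $y^2=x^3-1$ satisfies the hypotheses of Theorem \ref{thrm1} for $K=\Q(\sqrt{-3})$ (CM by $\mO_K$ via $j=0$, $h_K=1$ so $K(j(E))=K$, good reduction at $p>3$) is a welcome piece of bookkeeping that the paper leaves unstated.
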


\begin{corollary}\label{cor1}
Let $p \equiv 1 \Mod 3$, and consider $E/\F_p: y^2 = x^3 - 1$.  If $p^2 = 3n^2 + 3n + 1$ for some $n \in \Z$, then $\#E(\F_q) \equiv 0 \Mod{p^2}$.
\end{corollary}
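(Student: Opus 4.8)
The plan is to deduce the corollary from Theorem~\ref{thrm1}, applied to $K = \Q(\sqrt{-3})$ and the curve $E: y^2 = x^3 - 1$ --- which has complex multiplication by $\mO_K = \Z[\omega]$, $\omega = e^{2\pi i/3}$, and, since $h_K = 1$, is defined over $\Q$ and reduces mod $p$ to the given $E/\F_p$ --- together with Gold's criterion (Theorem~\ref{gold}). Since $p > 3$ and $p \equiv 1 \Mod 3$, the prime $p$ is odd, hence $p \equiv 1 \Mod 6$; moreover $p$ splits as $p\mO_K = \p\bar{\p}$ and $p \nmid h_K = 1$, so both theorems apply. Writing $\p = (\alpha)$, it suffices to show $\alpha^{p-1} \equiv 1 \Mod{\bar{\p}^2}$.

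The key is an explicit generator of $\p^2$. Set $\beta = (n+1) - n\omega \in \mO_K$; then $\Nrm(\beta) = (n+1)^2 + n(n+1) + n^2 = 3n^2 + 3n + 1 = p^2$ and $\Tr(\beta) = \beta + \bar{\beta} = 3n + 2$. A short check shows $p \nmid 3n+2$ (using $(3n+2)^2 = 3p^2 + (3n+1)$ and $0 < |3n+1| < p^2$), so $p \nmid \Tr(\beta)$, hence $p \nmid \beta$; since $(\beta)$ has norm $p^2$ it is therefore not $(p) = \p\bar{\p}$, so it equals $\p^2$ or $\bar{\p}^2$, and after relabelling $\p,\bar{\p}$ we may assume $(\beta) = \p^2$. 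From $\beta - (3n+2) = -\bar{\beta}$ we get $\beta \equiv 3n+2 \Mod{\bar{\p}^2}$; and because $\bar{\p}^2 \cap \Z = p^2\Z$ while $\#(\mO_K/\bar{\p}^2) = p^2$, reduction gives an isomorphism $\Z/p^2\Z \cong \mO_K/\bar{\p}^2$, so congruences mod $\bar{\p}^2$ are the same as congruences mod $p^2$.

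The computational heart is elementary. Reducing $p^2 = 3n^2 + 3n + 1$ mod $p^2$ gives $3n(n+1) \equiv -1 \Mod{p^2}$, whence $(3n+2)^2 \equiv 3n+1$ and $(3n+1)^2 \equiv -(3n+2) \Mod{p^2}$; combining these, $(3n+2)^4 \equiv -(3n+2) \Mod{p^2}$, and cancelling the unit $3n+2$ yields $(3n+2)^3 \equiv -1 \Mod{p^2}$, so $(3n+2)^6 \equiv 1 \Mod{p^2}$. Since $6 \mid p-1$, this gives $(3n+2)^{p-1} \equiv 1 \Mod{p^2}$, i.e.\ $\beta^{p-1} \equiv 1 \Mod{\bar{\p}^2}$. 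Now $\alpha^2 = u\beta$ for some $u \in \mO_K^\times$, and $u^{p-1} = 1$ since $\mO_K^\times$ has order $6 \mid p-1$; hence $\alpha^{2(p-1)} = \beta^{p-1} \equiv 1 \Mod{\bar{\p}^2}$. Since $\alpha^{p-1} \equiv 1 \Mod{\bar{\p}}$ by Fermat, $\alpha^{p-1}$ lies in the subgroup $1 + \bar{\p}\mO_K/\bar{\p}^2\mO_K$ of $(\mO_K/\bar{\p}^2)^\times$, which is cyclic of odd order $p$ and so has $1$ as its only square root of $1$; therefore $\alpha^{p-1} \equiv 1 \Mod{\bar{\p}^2}$. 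Gold's criterion now gives $\lambda_p(K) > 1$, and Theorem~\ref{thrm1} gives $\#E(\F_q) \equiv 0 \Mod{p^2}$ with $q = p^{p-1}$.

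The only genuinely nonroutine step is the opening observation that $3n^2 + 3n + 1 = \Nrm\big((n+1) - n\omega\big)$, so that the Diophantine hypothesis yields a clean generator of $\p^2$ (equivalently, the identity $(3n+2)^3 \equiv -1 \Mod{p^2}$); after that the proof is bookkeeping with the isomorphism $\mO_K/\bar{\p}^2 \cong \Z/p^2\Z$, the vanishing of unit contributions forced by $6 \mid p-1$, and the passage from $\alpha^{2(p-1)}$ to $\alpha^{p-1}$ in a group with no $2$-torsion. One must track the unit ambiguity in $\alpha$ and which prime is named $\p$, but neither causes trouble. Alternatively, since $(3n+2)^3 \equiv -1 \Mod{p^2}$ is essentially the statement that $3n+2$ is ``$1$-exceptional'' in the sense of Section~\ref{applications}, one could route the argument through Proposition~\ref{thrm2}; the direct route via Gold's criterion seems cleanest.
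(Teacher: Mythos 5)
Your proof is correct, but it takes a genuinely different route from the paper. The paper disposes of this corollary in one line: it invokes Proposition~\ref{thrm2} (the equivalence of $\#E(\F_q)\equiv 0 \Mod{p^2}$ with $p$ being $1$-exceptional for $m=3$) together with Corollary~6 of Cosgrave--Dilcher \cite{cd1}, which supplies the fact that primes with $p^2=3n^2+3n+1$ are $1$-exceptional for $m=3$. You instead verify Gold's criterion directly: the identity $3n^2+3n+1=\Nrm\bigl((n+1)-n\omega\bigr)$ hands you an explicit generator $\beta$ of $\p^2$ with $\beta\equiv 3n+2\Mod{\bar{\p}^2}$, the elementary computation $(3n+2)^3\equiv -1\Mod{p^2}$ gives $\beta^{p-1}\equiv 1\Mod{\bar{\p}^2}$ since $6\mid p-1$, and your handling of the unit ambiguity in $\alpha$ and the passage from $\alpha^{2(p-1)}$ to $\alpha^{p-1}$ (using that the kernel of $(\mO_K/\bar{\p}^2)^\times\to(\mO_K/\bar{\p})^\times$ has odd order $p$) is sound; Theorem~\ref{gold} and Theorem~\ref{thrm1} then finish. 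The trade-off: the paper's argument is shorter but leans on an external result about Gauss factorials, while yours is self-contained, stays entirely within the machinery already present in the paper, and exposes the arithmetic mechanism (the generator of $\p^2$ reduces to an element of order dividing $6$ in $(\Z/p^2\Z)^\times$). As you note, your congruence $(3n+2)^3\equiv-1\Mod{p^2}$ is essentially the $1$-exceptionality statement, so the two routes converge on the same underlying fact.
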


\begin{proof}
This follows from Proposition \ref{thrm2} and Corollary 6 in \cite{cd1}.
\end{proof}

\begin{proposition}\label{thrm3}
Let $p \equiv 1 \Mod 4$, and consider $E/\F_p: y^2 = x^3  + x$.  Then $p$ is $1$-exceptional for $m = 4$ if and only if $E_{p-1} \equiv 0 \Mod{p^2}$ if and only if  $\#E(\F_q) \equiv 0 \Mod{p^2}$, where $q = p^{p-1}$, and $\{E_n\}$ are the Euler numbers given by
\[
\frac{2}{e^x + e^{-x}} = \sum_{n = 0}^{\infty} E_n \frac{x^n}{n!}.
\]
\end{proposition}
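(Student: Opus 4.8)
The plan is to obtain Proposition \ref{thrm3} as a direct corollary of Theorem \ref{thrm1} together with the results of \cite{stokes}, in exact parallel with the way Proposition \ref{thrm2} was deduced. The statement is a chain of three conditions, so the strategy is to take the middle one, ``$p$ is $1$-exceptional for $m=4$'', as the common hub: the outer two equivalences then follow by citing Theorem \ref{thrm1} on one side and a corollary of \cite{stokes} on the other.

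First I would record the setup for $K = \Q(i)$. The curve $E : y^2 = x^3 + x$ has $j$-invariant $1728$ and complex multiplication by $\mO_K = \Z[i]$; since $h_K = 1$, the Hilbert class field of $K$ is $K$ itself, so $K(j(E)) = K$ and $E$ is already defined over $\Q$. The Weierstrass model $y^2 = x^3 + x$ has discriminant $-64$, so $E$ has good reduction at every prime $p > 2$. Because $p \equiv 1 \Mod 4$, the rational prime $p$ splits in $K$, say $p\mO_K = \p\bar{\p}$; moreover $p \geq 5 > 3$ and $p \nmid h_K$. Hence all hypotheses of Theorem \ref{thrm1} are met (with $\tilde{E}$ the reduction of $E$ at the rational prime $p$), and it gives: $\lambda_p(\Q(i)) > 1$ if and only if $\#\tilde{E}(\F_q) \equiv 0 \Mod{p^2}$, where $q = p^{p-1}$.

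Next I would invoke Theorem 1.1 of \cite{stokes}, which identifies ``$p$ is $1$-exceptional for $m = 4$'' with ``$\lambda_p(\Q(i)) > 1$''. Composing this with the equivalence just displayed yields the link between the $1$-exceptional condition and the point-count congruence $\#E(\F_q) \equiv 0 \Mod{p^2}$. The remaining equivalence, with the Euler-number congruence $E_{p-1} \equiv 0 \Mod{p^2}$, is precisely Corollary 1.4 of \cite{stokes} (the $m = 4$ companion of the Glaisher-number statement used in Proposition \ref{thrm2}), so it can simply be cited. Assembling the three equivalences proves the proposition.

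I do not expect a genuine obstacle here: the substantive content resides in Theorem \ref{thrm1} and in \cite{stokes}, and this proposition merely assembles those ingredients. The points requiring care are purely bookkeeping --- checking that $p \equiv 1 \Mod 4$ is exactly the condition for $p$ to split in $\Q(i)$, that the given model of $E$ has good reduction at $p$, and that the normalizations of ``$1$-exceptional for $m=4$'' and of the Euler numbers $\{E_n\}$ match those of \cite{stokes}. If a self-contained treatment were wanted, the only nontrivial step would be reproving Theorem 1.1 of \cite{stokes}, which runs through Gold's criterion (Theorem \ref{gold}) and an explicit Wilson-quotient evaluation of $\alpha^{p-1}$ modulo $\bar{\p}^2$ for $\p^{h_K} = (\alpha)$; but we are entitled to treat that citation as a black box.
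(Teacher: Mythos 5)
Your proposal matches the paper's (implicit) argument exactly: the paper deduces Proposition \ref{thrm3} by combining Theorem 1.1 and Corollary 1.4 of \cite{stokes} with Theorem \ref{thrm1} applied to $K=\Q(i)$ and the CM curve $y^2=x^3+x$, which is precisely your chain of equivalences through the hub ``$\lambda_p(\Q(i))>1$''. The bookkeeping you flag (splitting of $p\equiv 1\Mod 4$ in $\Q(i)$, good reduction, $h_K=1$) is all that is needed beyond the citations.
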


Next, let $q = p^n$ for some $n \in \Z^+$, and denote $\psi_q : \F_q \to \{\pm 1\}$ to be the nontrivial character of order $2$.  If $E/\F_q$ is an elliptic curve given by $y^2 = f(x)$, then we have
\[
\# E(\F_q) = 1 + q + \sum_{x \in \F_q} \psi_q(f(x)).
\]
Therefore, by Theorem \ref{thrm1}:
\begin{proposition}\label{prop2}
Let $K$ be an imaginary quadratic field, and $p $ an odd prime that splits in $K$.  Suppose that $E$ is an elliptic curve with CM by $\mO_K$ with good reduction by the prime $\frak{P}$ of $K(j(E))$ above $p$.  Further, assume that the residue degree $f(\frak{P}/\frak{p}) \leq p-1$.  Denote $\tilde{E}: y^2 = f(x)$ to be the reduction of $E$ modulo $\frak{P}$.  Then 
\[
\lambda_p(K) > 1 \iff \sum_{x \in \F_q} \psi_q(f(x)) \equiv -1 \Mod{p^2}
\]
where $q = p^{p-1}$.
\end{proposition}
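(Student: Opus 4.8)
The plan is to obtain Proposition \ref{prop2} as an immediate consequence of Theorem \ref{thrm1} together with the point-count identity recorded just above the statement. Since $p$ is odd, the reduced curve $\tilde E$ has a Weierstrass model of the form $y^2 = f(x)$, and — because the residue degree $f(\frak{P}/\frak{p})$ is at most $p-1$, exactly as in the setting of Theorem \ref{thrm1} — we may regard $\tilde E$ as a curve over $\F_q$ with $q = p^{p-1}$, so that the quantity $\#\tilde E(\F_q)$ is meaningful. For such a curve one has
\[
\# \tilde E(\F_q) = 1 + q + \sum_{x \in \F_q} \psi_q(f(x)),
\]
where $\psi_q$ is the quadratic character of $\F_q$; this is precisely the formula displayed immediately before the proposition.

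The one remaining step is an arithmetic reduction modulo $p^2$. Since $p$ is odd we have $p - 1 \ge 2$, hence $q = p^{p-1} \equiv 0 \Mod{p^2}$. Substituting this into the identity above gives
\[
\# \tilde E(\F_q) \equiv 1 + \sum_{x \in \F_q} \psi_q(f(x)) \Mod{p^2},
\]
so $\#\tilde E(\F_q) \equiv 0 \Mod{p^2}$ holds if and only if $\sum_{x \in \F_q} \psi_q(f(x)) \equiv -1 \Mod{p^2}$. By Theorem \ref{thrm1} the congruence $\#\tilde E(\F_q) \equiv 0 \Mod{p^2}$ is equivalent to $\lambda_p(K) > 1$, and chaining the two equivalences completes the argument.

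I do not expect any genuine obstacle: the proposition is a formal corollary of Theorem \ref{thrm1}. The only points that merit a word of care are (i) that reducing $E$ modulo $\frak{P}$ and base-changing to $\F_q$ is legitimate, which is immediate since $p \neq 2$ (so a model $y^2=f(x)$ exists) and the residue-degree hypothesis is precisely what places the residue field of $\frak{P}$ inside $\F_q$, and (ii) the elementary observation that the summand $1 + q$ contributes only $1$ to $\#\tilde E(\F_q)$ modulo $p^2$. Everything else is a direct invocation of Theorem \ref{thrm1} and of the character-sum expression for $\#\tilde E(\F_q)$ recorded above.
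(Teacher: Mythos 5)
Your proposal is correct and matches the paper's own (essentially one-line) derivation: the paper states the character-sum identity $\#\tilde{E}(\F_q) = 1 + q + \sum_{x \in \F_q}\psi_q(f(x))$ immediately before the proposition and deduces it "by Theorem \ref{thrm1}," exactly as you do. Your explicit observation that $q = p^{p-1} \equiv 0 \Mod{p^2}$ for odd $p$ is the one small arithmetic step the paper leaves implicit, and you have supplied it correctly.
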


Note that in Proposition \ref{prop2}, the condition $f(\frak{P}/\frak{p}) \leq p-1$  ensures that $\tilde{E}$ is defined over $\F_q$.  This is a mild condition, since $f(\frak{P}/\frak{p}) \leq h_K$, and $p$ is usually large.  We will record one more consequence of Theorem \ref{thrm1}.

\begin{proposition}\label{prop1}
Let $p >3$ be a prime, and denote $\mathscr{K}_p$ to be the set of imaginary quadratic fields $K$ such that,
\begin{itemize}
\item[i.] $p$ splits in $K$
\item[ii.] The primes above $p$ are principal in $\mO_K$
\item[iii.] There is a elliptic curve that has CM by $\mO_K$ and good reduction at $\frak{P}$, which is a prime of the Hilbert class field of $K$ above $p$
\end{itemize}
Consider the finite set $\mathcal{E}_p$ of ordinary elliptic curves defined over $\F_p$.  If there is no $E \in \mathcal{E}_p$ such that $\# E(\F_{p^{p-1}}) \equiv 0 \Mod{p^2}$, then $\lambda_p(K) = 1$ for all $K \in \mathscr{K}_p$.
\end{proposition}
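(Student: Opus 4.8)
The plan is to fix an arbitrary $K \in \mathscr{K}_p$ and prove $\lambda_p(K) = 1$ by attaching to $K$ an explicit member of $\mathcal{E}_p$ and then quoting Theorem \ref{thrm1}. Since $p$ splits in $K$ by (i), we already know $\lambda_p(K) > 0$, so it is enough to show that $\lambda_p(K) > 1$ is impossible.

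The first step is to use (ii) and (iii) to produce an ordinary elliptic curve over $\F_p$. Let $E$ be a curve as in (iii): it has CM by $\mO_K$ and good reduction at a prime $\mathfrak{P}$ of the Hilbert class field $K(j(E))$ above $p$. Write $\p = \mathfrak{P} \cap \mO_K$, one of the two primes of $K$ over $p$. Because $K(j(E))/K$ is the Hilbert class field, it is unramified and Galois with $\Gal(K(j(E))/K) \cong Cl(K)$, and the residue degree $f(\mathfrak{P}/\p)$ equals the order of the Frobenius of $\p$, hence the order of the ideal class of $\p$. By (ii) that class is trivial, so $f(\mathfrak{P}/\p) = 1$ and $\mO_{K(j(E))}/\mathfrak{P} \cong \mO_K/\p \cong \F_p$. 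Therefore the reduction $\tilde{E}$ of $E$ modulo $\mathfrak{P}$ is defined over $\F_p$; by Deuring's criterion (Theorem \ref{deuring}) it is ordinary, since $p$ splits in $K$. Thus the isomorphism class of $\tilde{E}$ lies in the finite set $\mathcal{E}_p$.

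The second step is immediate. Set $q = p^{p-1}$. The hypothesis of the proposition says $\# E'(\F_q) \not\equiv 0 \Mod{p^2}$ for every $E' \in \mathcal{E}_p$, so in particular $\# \tilde{E}(\F_q) \not\equiv 0 \Mod{p^2}$. Since $\tilde{E}$ is the good reduction of the CM curve $E$ at a prime above the split prime $\p$, Theorem \ref{thrm1} applies and yields $\lambda_p(K) \le 1$. Together with $\lambda_p(K) > 0$ this gives $\lambda_p(K) = 1$, and as $K$ was an arbitrary element of $\mathscr{K}_p$, the proposition follows.

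There is no deep obstacle here; the one point that deserves care is the claim that $\tilde{E}$ is defined over $\F_p$ and not merely over some $\F_{p^f}$ with $f > 1$, and this is exactly where hypothesis (ii) enters, through the defining property of the Hilbert class field that a prime of $K$ splits completely in it precisely when it is principal. Absent (ii) one would only get $\tilde{E}/\F_{p^f}$ with $f$ dividing $h_K$, and then $\tilde{E}$ need not lie in $\mathcal{E}_p$, so the deduction would stall; hypothesis (iii) is what guarantees there is a curve to reduce at all, and (i) supplies both $\lambda_p(K) > 0$ and, via Deuring, the ordinariness of $\tilde{E}$. One should also note in passing that the conditions under which Theorem \ref{thrm1} was established (notably $p > 3$) hold for every $K \in \mathscr{K}_p$.
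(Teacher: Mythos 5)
Your proposal is correct and follows essentially the same route as the paper's proof: use hypothesis (ii) to see that the prime splits completely in the Hilbert class field, so the reduction lands in $\mathcal{E}_p$ (with Deuring supplying ordinariness), and then apply Theorem \ref{thrm1}. You simply spell out the residue-degree computation and the final appeal to $\lambda_p(K) > 0$ in more detail than the paper does.
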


\begin{proof}
Let $p > 3$ be a prime and $K \in \mathscr{K}_p$.  Since $p$ is principal in $K$, we have that $p$ splits completely in the Hilbert class field of $K$.  Therefore, reducing an elliptic curve with CM by $\mO_K$ by a prime in the Hilbert class field above $p$ gives a curve in $\mathcal{E}_p$.  Now simply apply Theorem \ref{thrm1} to get the desired result.
\end{proof}

\begin{example}
Keep the notation from Proposition \ref{prop1}.  Then we can check using any standard computer algebra system that for $3< p < 50$, we have $\lambda_p(K) = 1$ for all $K \in \mathscr{K}_p$ when $p = 7, 17, 19, 29, 31, 37, 43$ and $47$.  It would be interesting to know for a fixed prime $p> 3$ whether or not the set $\mathscr{K}_p$ is infinite.
\end{example}

\section{Acknowledgments}

The author wishes to thank Nancy Childress for our many constructive discussions on this topic, and the referee for helpful comments and suggestions.

\end{document}